\newcommand{\N}{\mathbb{N}}                     
\newcommand{\Z}{\mathbb{Z}}                     
\newcommand{\R}{\mathbb{R}}                     
\newcommand{\T}{\mathbb{T}}                     
\newcommand{\set}[2]{\left\{{#1}\mid{#2}\right\}}       
\newcommand{\qed}{\hfill $\Box$ \bigskip}       
\newcommand{\supp}{\mathrm{supp\,}}             
\newcommand{\graf}{\mathrm{graph\,}}            
\newcommand{\grad}{\mathrm{grad\,}}		
\newcommand{\hess}{\mathrm{Hess\,}}		
\newcommand{\sign}{\mathrm{sign\,}}		
\newcommand{\scal}[1]{\langle{#1}\rangle}     
\newtheorem{thm}{\sc Theorem}[section]      
\newtheorem{cor}[thm]{\sc Corollary}        
\newtheorem{lem}[thm]{\sc Lemma}            
\newtheorem{prop}[thm]{\sc Proposition}     
\newtheorem{rem}[thm]{\sc Remark}	    
\title{Invariant measures of Hamiltonian systems with prescribed
  asymptotic Maslov index}
\author{Alberto Abbondandolo and Alessio Figalli}
\date{September 29, 2007}
\begin{document}

\maketitle

\renewcommand{\theenumi}{\roman{enumi}}
\renewcommand{\labelenumi}{(\theenumi)}

\centerline{\em Dedicated to Vladimir Igorevich Arnold}

\begin{abstract}
We study the properties of the asymptotic Maslov index of invariant
measures for time-periodic Hamiltonian systems on the cotangent bundle 
of a compact manifold $M$. We show that if $M$ has finite fundamental 
group and the Hamiltonian satisfies some general growth assumptions on 
the momenta, the asymptotic Maslov indices of periodic orbits are
dense in the half line $[0,+\infty)$. Furthermore, if the
Hamiltonian is the Fenchel dual of an electro-magnetic Lagrangian, every
non-negative number $r$ is the limit of the asymptotic Maslov indices of a
sequence of periodic orbits which converges narrowly
to an invariant measure with asymptotic Maslov index $r$. We discuss
the existence of minimal ergodic invariant measures with prescribed asymptotic
Maslov index by the analogue of Mather's theory of the beta function, the 
asymptotic Maslov index playing the role of the rotation vector.
\end{abstract}

\section*{Introduction}

The Lagrangian Grassmannian $\mathscr{L}(n)$ is the space of $n$-dimensional linear subspaces of $\R^{2n}$ on which the
symplectic form $\omega_0 = \sum_{j=1}^n dp^j \wedge dq^j$ vanishes. The fundamental group of  $\mathscr{L}(n)$ is infinite cyclic. The Maslov index of a closed loop of Lagrangian
subspaces is a measure of its winding number in $\mathscr{L}(n)$.
It was introduced by Maslov in his book on perturbation methods in
quantum mechanics \cite{mas72}, and its geometric interpretation both
as a topological winding number and as an algebraic intersection number was
discussed by Arnold in an appendix of the same book and in \cite{arn67}.
Here we will use the definition by Robbin and Salamon \cite{rs93} of
the Maslov index $\mu(\lambda,\lambda_0)$ of a path $\lambda$ in
$\mathscr{L}(n)$ with arbitrary end-points with respect to some
$\lambda_0\in \mathscr{L}(n)$.

If $x(t)=(q(t),p(t))$ is the orbit of a point $(q_0,p_0)$ by
a Hamiltonian flow on the phase space $\R^{2n}$, we can define
$\mu_T(q_0,p_0)$ as the Maslov index of the
evolution of the vertical space $(0) \times \R^n$ by the differential
of the flow at $(q_0,p_0)$, with respect to the vertical space itself, over the time
interval $[0,T]$. When the Hamiltonian is strictly convex in the
momentum variables $p$, this Maslov index is non-negative, and it
coincides - up to a suitable additive constant - with the Morse index
of the curve $q(t)$, seen as an extremal curve of the Lagrangian
action functional with fixed end-points at $t=0$ and $t=T$,
as proved by Duistermaat in \cite{dui76}.
For a general Hamiltonian, this interpretation as a Morse index is
recovered by Floer's approach to the study of the Hamiltonian action
functional, as shown by Salamon and Zehnder in \cite{sz92}.
These facts remain true if we replace the phase space
$\R^{2n}$ by the cotangent bundle $T^*M$ of an arbitrary manifold, and
the vertical Lagrangian space $(0)\times \R^n$ by the vertical
subbundle $T^v T^*M = \ker D\pi$, where $\pi:T^*M \rightarrow M$
denotes the projection.

In this paper we are interested in the {\em asymptotic Maslov index},
that is in the limit
\[
\hat{\mu}(q_0,p_0) := \lim_{T \rightarrow +\infty} \frac{\mu_T(q_0,p_0)}{T}.
\]
Ergodic theory implies that the above limit exists for almost every
initial condition $(q_0,p_0)$, with respect to any invariant
probability measure $\eta$ on $T^*M$, and that the {\em asymptotic
  Maslov index} of an invariant probability measure $\eta$ is well-defined:
\[
\hat{\mu}(\eta) := \int \hat{\mu} \, d\eta = \lim_{T \rightarrow
  +\infty} \int \frac{\mu_T}{T} \, d\eta,
\]
provided that the functions $\mu_T$ are in $L^1(T^*M,\eta)$. This
invariant was introduced by Ruelle \cite{rue85}, who also showed that
the asymptotic Maslov index of an invariant measure is continuous with
respect to the narrow topology of measures and with respect to
perturbations of the Hamiltonian system, a fact which fails to be true
for other ergodic invariants such as the Lyapunov exponents. Under
different names, this invariant appears in the context of systems
defined by a Hamiltonian which is convex in the $(q,p)$ variables in
the book of Ekeland \cite{eke90}, and in the context
of Lagrangian systems in \cite{abb94} and \cite{ba96}. Contreras, Gambaudo, Iturriaga and Paternain \cite{cgip03} have
extended this invariant to symplectic manifolds with a
fixed Lagrangian bundle, and have studied the properties of the asymptotic
Maslov index of the Liouville measure on the energy level of
optical Hamiltonians.

Here we deal mainly with time-periodic Hamiltonian systems on the
cotangent bundle of a compact manifold $M$. We start by recalling the
definition and the continuity properties of the asymptotic
Maslov index in this context, paying special attention to invariant
measures which are not compactly supported (as often happens when
there is no conservation of energy). Let $I(H)\subset \R$ be the
set - actually an interval -  consisting of the asymptotic
Maslov indices of the invariant measures.
Our first result is that when $M$ is compact and has finite
fundamental group and the Hamiltonian satisfies some very general growth
assumptions, the interval $I(H)$ contains the whole half-line
$[0,+\infty)$. Actually, the asymptotic Maslov indices of the
contractible closed orbits with integer period are dense in
$[0,+\infty)$. The proof is based on Floer's approach by
$J$-holomorphic curves, together with a result from \cite{af07}
which allows to deal with general growth conditions on the Hamiltonian.
A large interval $I(H)$ is in sharp contrast with what may happen with 
Hamiltonian systems on configuration spaces having infinite
fundamental group: all the invariant measures for the geodesic flow
on a compact Riemannian manifold with non-positive sectional curvature - which
necessarily has infinite fundamental group - have vanishing
asymptotic Maslov index, so $I(H)=\{0\}$ in this case.

If a positive number $r$ is not the asymptotic Maslov index
of any contractible periodic orbit - generically we expect a
countable number of periodic orbits - it seems natural to consider
a sequence of periodic orbits with asymptotic Maslov index
converging to $r$ and try to take a limit. Our second and
main result is that this is indeed possible for a Hamiltonian
which is the Fenchel transform of an electro-magnetic Lagrangian
\[
L(t,q,v) = \frac{1}{2} \langle v,v \rangle + \langle A(t,q),v \rangle
- V(t,q),
\]
provided that the manifold $M$ is compact and has finite fundamental group.
Under these assumptions, we prove that for any $r \geq 0$ at least one of the following
statements must hold:
\begin{enumerate}
\item There is a contractible closed orbit with integer period and
  asymptotic Maslov index $r$.

\item There is a sequence of contractible closed orbits with
  integer minimal periods
  $k_j \rightarrow +\infty$ and asymptotic Maslov indices $\hat{\mu}_j
  \rightarrow r$. The corresponding sequence of probability
  measures narrowly converges to an invariant probability
  measure $\eta$ with finite second
  moment and asymptotic Maslov index $\hat{\mu}(\eta)=r$.
\end{enumerate}
In the last section we outline the analogue of Mather's theory of
the beta function, the asymptotic Maslov index $\hat{\mu}\in I(H)$
playing the role of the rotation class $\rho\in H_1(M,\R)$. For
sake of simplicity, we deal with electromagnetic Lagrangians on
compact manifolds with finite fundamental group, so that
$I(H)=[0,+\infty)$ and for any $r\geq 0$ there is an invariant
measure with asymptotic Maslov index $r$ and finite action. In
this case, the function
\[
\beta : [0,+\infty) \rightarrow \R,
\]
which associates to $r$ the minimal Lagrangian action over all
the invariant measures with asymptotic Maslov index $r$ is
convex and has quadratic growth. In
particular, $\beta$ must have infinitely many strict extremal points,
and if $r$ is such a point the minimum $\beta(r)$ is
achieved by an ergodic invariant measure.
Notice that the topological assumption on $M$ implies that $H_1(M,\R)=(0)$, so
Mather theory would produce just one class of minimal invariant measures, while
the fact that $I(H)=[0,+\infty)$ allows to find 
a diverging sequence of numbers 
which are asymptotic Maslov indices of ergodic action-minimizing measures.
Of course, in this situation there are also infinitely
many periodic orbits, which produce ergodic invariant measures, but
these measures need not minimize the action
among invariant measures with prescribed asymptotic Maslov index.

Other related problems remain open. Mather \cite{mat91} has proved that
measures minimizing the action with given rotation class are
supported in the image of a Lipschitz section of $TM$. What is the
structure of a minimizing measure with given asymptotic Maslov index?
Ma\~n\'e \cite{man96} has proved that a measure which minimizes the
action among {\em closed} measures is invariant. Can one extend the
notion of asymptotic Maslov index to some class of closed measures,
and prove that a measure which minimizes the action
among closed ones with given asymptotic Maslov index is invariant?

\paragraph{Acknowledgment.} We wish to express our gratitude to 
Felix Schlenk for introducing us to Gromov's length estimate.
 
\section{The Maslov index}

In this section we review the basic properties of the Maslov index,
following \cite{rs93}.

We consider coordinates $(q^1,\dots, q^n,p^1,\dots, p^n)$ on the
Euclidean space $\R^{2n}$, endowed with its standard symplectic
structure, given by the alternating 2-form
\[
\omega_0 = \sum_{j=1}^n dp^j \wedge dq^j.
\]
The symplectic group - that is the
group of linear isomorphisms of $\R^{2n}$ preserving $\omega_0$ - is
denoted by $\mathrm{Sp}(2n)$. Let $\mathscr{L}(n)$ be the manifold of
Lagrangian subspaces of $(\R^{2n},\omega_0)$, that is $n$-dimensional
linear subspaces of $\R^{2n}$ on which $\omega_0$ vanishes
identically.

Let $\lambda:[a,b] \rightarrow \mathscr{L}(n)$
be a continuous path of Lagrangian subspaces, and let $\lambda_0$ be a
fixed Lagrangian subspace. The {\em Maslov index}
$\mu(\lambda,\lambda_0)$ is a half-integer counting the
intersections of $\lambda(t)$ with $\lambda_0$, with suitable
orientation signs and multiplicity. Rather than defining it, we prefer
to list its characterizing properties:
\begin{description}
\item[Naturality.] If $\Phi\in \mathrm{Sp}(2n)$, then $\mu(\Phi
  \lambda,\Phi \lambda_0) =
  \mu(\lambda,\lambda_0)$.
\item[Juxtaposition.] If $a<c<b$, then $\mu(\lambda,\lambda_0) =
\mu(\lambda|_{[a,c]},\lambda_0) +\mu(\lambda|_{[c,b]},\lambda_0)$.
\item[Product.] If $n'+n''=n$ and $\mathscr{L}(n') \times
  \mathscr{L}(n'')$ is identified with a submanifold of
  $\mathscr{L}(n)$ in the obvious way, then $\mu(\lambda'\oplus
  \lambda'',\lambda_0' \oplus \lambda''_0) = \mu(\lambda',\lambda_0')
+  \mu(\lambda'',\lambda_0'')$.
\item[Homotopy.] If $\lambda,\lambda': [a,b] \rightarrow
  \mathscr{L}(n)$ have the same end-points and are homotopic with
  fixed end-points, then $\mu(\lambda,\lambda_0) =
  \mu(\lambda',\lambda_0)$.
\item[Localization.] If $\lambda(t)=\graf S(t)$, the graph of a path of
  symmetric endomorphisms of $\R^n$, 
  then $\mu(\lambda,\R^n \times (0))= (\sign S(b)-\sign S(a))/2$, where
  $\sign$ denotes the signature.
\item[Zero.] For every path $\lambda: [a,b] \rightarrow
  \mathscr{L}(n)$ such that the dimension of $\lambda(t)\cap
  \lambda_0$ does not depend on $t$, the Maslov index
  $\mu(\lambda,\lambda_0)$ is zero.
\end{description}

A related invariant is the {\em Conley-Zehnder index} $\mu_{CZ}(\Phi)$
of a continuous path $\Phi:[a,b] \rightarrow \mathrm{Sp}(2n)$ in the symplectic
group. It can be defined as
\[
\mu_{CZ}(\Phi) = \mu(\graf \Phi, \Delta),
\]
the Maslov index of the graph of $\Phi$ with respect to the diagonal
$\Delta$ of $\R^{2n}\times \R^{2n}$. Here $\R^{2n} \times \R^{2n}$ is
endowed with the symplectic form $\omega \oplus (-\omega)$, so that the diagonal, and more generally the graph of every
symplectic isomorphism, is a Lagrangian subspace. When $\Phi(a)=I$
and $\Phi(b)-I$ is invertible, $\mu_{CZ}(\Phi)$ is an integer. The
Conley-Zehnder index is significant when $\Phi: \R
\rightarrow \mathrm{Sp}(2n)$ is the fundamental
solution of a linear $k$-periodic Hamiltonian system. In this
situation, the limit
\[
\hat{\mu}_{CZ}(\Phi):=
\lim_{h\rightarrow +\infty} \frac{\mu_{CZ}(\Phi|_{[0,hk]})}{hk}
\]
exists, and
\begin{equation}
\label{bott}
|\mu_{CZ}(\Phi|_{[0,hk]}) - hk \, \hat{\mu}_{CZ}(\Phi)| \leq 2n,
\end{equation}
as it is proved in \cite{sz92}. 
See also \cite{lon02} for finer estimates on the behavior
of the Conley-Zehnder index under iteration.
The quantity $\hat{\mu}_{CZ}(\Phi)$ is called the {\em Bott index}
of the linear Hamiltonian system solved by $\Phi$.

Let $\Phi:[a,b] \rightarrow \mathrm{Sp}(2n)$ be a continuous path in the
symplectic group, and let $\lambda_0\in \mathscr{L}(n)$. Then
\begin{equation}
\label{czvmas}
|\mu_{CZ}(\Phi) - \mu(\Phi\lambda_0,\lambda_0)| \leq 2n.
\end{equation}
In fact, by Theorem 3.2 in \cite{rs93},
\[
\mu(\Phi \lambda_0,\lambda_0) = \mu(\graf \Phi, \lambda_0\times \lambda_0),
\]
so
\[
\mu_{CZ}(\Phi) - \mu(\Phi\lambda_0,\lambda_0) = \mu (\graf \Phi, \Delta) - \mu
(\graf \Phi, \lambda_0\times \lambda_0)
\]
is the {\em H\"ormander index} of the 4-uple of Lagrangian subspaces
of $\R^{2n} \times \R^{2n}$
\[
(\lambda_0\times \lambda_0, \Delta; \graf \Phi(a), \graf \Phi(b)),
\]
a half-integer which does not depend on the path $\Phi$ and whose absolute value does not exceed $2n$ (see \cite{hor71}, \cite{dui76}, or
\cite[Theorem 3.5]{rs93}).

\section{The asymptotic Maslov index on cotangent bundles}
\label{sect:asympMaslov}

Let $M$ be a compact $n$-dimensional smooth manifold without
boundary. The manifold $T^*M$ is endowed with the symplectic structure
\[
\omega = \sum_{j=1}^n dp^j \wedge dq^j,
\]
where $(q^1,\dots,q^n,p^1,\dots ,p^n)$ are cotangent local coordinates
on $T^*M$. The fibers of the vertical subbundle of $TT^*M$,
\[
T^v_x T^*M = \ker D \pi(x),
\]
where $\pi:T^*M \rightarrow M$ is the projection, are Lagrangian
subspaces of the symplectic vector spaces $T_x T^*M$.

Let $H\in C^{\infty}(\T \times
T^*M)$ be a time-periodic Hamiltonian on the cotangent bundle of $M$.
Here $\T=\R / \Z$ indicates the circle. The induced Hamiltonian vector
field $X_H$ is the time-periodic tangent vector field on $T^*M$
defined by
\[
\omega(X_H(t,x),\xi) = - D_x H(t,x)[\xi], \quad \forall \xi\in T_x
T^*M.
\]
At the moment, we only assume that $X_H$ generates a global flow on
the extended phase space $\T \times T^*M$, that is
\[
\phi: \R \times \T \times T^*M \rightarrow \T \times T^*M, \quad
\phi_t(s,x_0)=(s+t,\varphi_{s+t}(s,x_0)),
\]
where $\varphi: \R \times \T \times T^*M \to T^*M$ solves the Cauchy problem
\[
\partial_t \varphi_t(s,x_0) = X_H(t,\varphi_t(s,x_0)), \quad
\varphi_s(s,x_0)=x_0.
\]
Since $M$ is compact, this is true for instance if $H$ is coercive and
it satisfies the estimate
\[
\partial_t H(t,x) \leq c (1 + H(t,x)), \quad \forall (t,x) \in \T
\times T^*M,
\]
for some positive constant $c$.

Consider the orbit $x(t):=\varphi_t(s,x_0)$ such that $x(s)=x_0\in T^*M$.
The isomorphism
\[
D_x\varphi_t(s,x_0) : T_{x_0} T^*M \rightarrow T_{x(t)} T^*M
\]
is symplectic, so
\[
\lambda_t(s,x_0) := D_x\varphi_t(s,x_0) T_{x_0}^v T^*M
\]
is a Lagrangian subspace of $T_{x(t)}T^*M$. By use of a symplectic
trivialization of $x^*(TT^*M)$,
\[
U(t) : (\R^{2n},\omega_0) \longrightarrow (T_{x(t)} T^*M, \omega_{x(t)})
\]
mapping the Lagrangian subspace $\lambda_0 := (0)\times \R^n$ into the
vertical subspace $T_{x(t)}^v T^*M$, we can transform the path
$t\mapsto \lambda_t(s,x_0)$ into a path
\[
\lambda^U(t) := U(t)^{-1} \lambda_t(s,x_0)
\]
of Lagrangian subspaces of $(\R^{2n},\omega_0)$. By the Juxtaposition, Homotopy, and Zero properties of the Maslov index, the number
\[
\mu_t(s,x_0) := \mu(\lambda^U|_{[s,s+t]},\lambda_0) - \frac{n}{2}
\]
does not depend on the choice of the trivialization $U$ (see for
instance \cite{oh97, as06}). The normalization constant $-n/2$ appears
because when the Hamiltonian is strictly convex on the fibers and
$t$ is positive but smaller than the first conjugate instant, we would like
$\mu_t(s,x_0)$ to be zero.
The function $\mu_t$ is neither lower nor
upper semicontinuous, but if we denote by $\underline{\mu}_t$ and
$\overline{\mu}_t$  its lower and upper semi-continuous
envelopes, we have the bounds
\begin{equation}
\label{squeeze}
\underline{\mu}_t \leq \mu_t \leq \overline{\mu}_t, \quad \overline{\mu}_t
- \underline{\mu}_t \leq 2n.
\end{equation}

Now we would like to consider the limit for $t\to +\infty$ of the
quotient $\mu_t/t$. This is possible thanks to the following:

\begin{thm}
\label{erg}
Let $\eta$ be a Borel probability measure on $\T \times T^*M$ such
that:
\begin{enumerate}
\item $\eta$ is invariant for the flow $\phi$;
\item for every $t\geq 0$ the function $\mu_t$ is in $L^1(\T \times
T^*M,\eta)$.
\end{enumerate}
Then for $\eta$-almost every  $(s,x_0)\in \T \times T^*M$ the limit
\[
\hat{\mu}(s,x_0) := \lim_{t\rightarrow +\infty} \frac{\mu_t(s,x_0)}{t}
\]
exists, and it defines a $\phi$-invariant function in $L^1(\T \times
T^*M,\eta)$. Furthermore,
\[
\int_{A} \hat{\mu} (s,x_0) \, d\eta(s,x_0) =
\lim_{t\rightarrow +\infty} \frac{1}{t} \int_{A}
\mu_t(s,x_0) \, d\eta(s,x_0),
\]
for every $\phi$-invariant Borel set $A\subset \T \times T^*M$.
\end{thm}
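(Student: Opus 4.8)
The plan is to realize $t \mapsto \mu_t$ as an almost-additive cocycle over the measure-preserving flow $(\phi_t,\eta)$ and then invoke Kingman's subadditive ergodic theorem. The key algebraic input is a bounded-error cocycle identity: there is a constant $C=C(n)$ such that
\[
|\mu_{t+t'}(s,x_0) - \mu_t(s,x_0) - \mu_{t'}(\phi_t(s,x_0))| \leq C \qquad \text{for all } s,x_0 \text{ and } t,t'\geq 0.
\]
First I would prove this. By the Juxtaposition property, $\mu(\lambda^U|_{[s,s+t+t']},\lambda_0)$ splits as $\mu(\lambda^U|_{[s,s+t]},\lambda_0)+\mu(\lambda^U|_{[s+t,s+t+t']},\lambda_0)$, and the first summand is $\mu_t(s,x_0)+n/2$. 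The second summand and $\mu_{t'}(\phi_t(s,x_0))+n/2$ are Maslov indices of two paths that evolve under the same symplectic flow along the orbit segment over $[s+t,s+t+t']$ but start from different Lagrangian subspaces, namely the pushed-forward vertical $D\varphi_t(s,x_0)\,T^v_{x_0}T^*M$ versus the vertical $T^v_{x(t)}T^*M$ at the current point. Passing to a common trivialization of the segment and using the graph identity $\mu(\Psi\alpha,\lambda_0)=\mu(\graf\Psi,\lambda_0\times\alpha)$ of \cite{rs93}, their difference is the H\"ormander index of a $4$-tuple of Lagrangians in $\R^{2n}\times\R^{2n}$, which is path-independent and bounded by $2n$ in absolute value, exactly the mechanism behind \eqref{czvmas}. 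Collecting the $n/2$ terms yields the displayed bound.

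The bound shows that $g_t:=\mu_t+C$ is a subadditive cocycle and $k_t:=\mu_t-C$ a superadditive one; measurability of $\mu_t$ comes from the squeeze \eqref{squeeze} by its semicontinuous envelopes, and assumption (ii) places both in $L^1(\eta)$. Integrating the superadditive bound and using invariance of $\eta$ (so $\int\mu_{t'}\circ\phi_t\,d\eta=\int\mu_{t'}\,d\eta$ and $\eta$ has unit mass) shows that $t\mapsto\int\mu_t\,d\eta$ is superadditive up to the constant $C$, hence $\frac1t\int\mu_t\,d\eta$ is bounded below; this verifies the hypothesis $\inf_t\frac1t\int g_t\,d\eta>-\infty$ required by Kingman. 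Applying Kingman's subadditive ergodic theorem to $g_t$ over the time-one map $\phi_1$ produces a $\phi_1$-invariant $\hat\mu\in L^1(\eta)$ with $\mu_n/n\to\hat\mu$ $\eta$-almost everywhere along integers $n$, together with the integral conclusion along integers.

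To upgrade to continuous time, write $t=n+\theta$ with $\theta\in[0,1)$; the cocycle bound gives $|\mu_t-\mu_n-\mu_\theta\circ\phi_n|\leq C$, so it suffices that $\Theta\circ\phi_n/n\to 0$ almost everywhere, where $\Theta:=\sup_{0\leq\theta\leq 1}|\mu_\theta|$. Provided $\Theta\in L^1(\eta)$, this follows from the Birkhoff theorem, since a convergent Ces\`aro average of the non-negative terms $\Theta\circ\phi_n$ forces $\Theta\circ\phi_n/n\to 0$. Thus $\mu_t/t\to\hat\mu$ for every real $t\to+\infty$, $\eta$-almost everywhere, and $\hat\mu$ is invariant under the whole flow. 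For the integral formula on a $\phi$-invariant set $A$ with $\eta(A)>0$, I would restrict: the normalization of $\eta|_A$ is an invariant probability measure for the flow on $A$ and $g_t\in L^1(\eta|_A)$, so Kingman's integral conclusion on $(A,\eta|_A)$ reads $\int_A\hat\mu\,d\eta=\lim_t\frac1t\int_A\mu_t\,d\eta$, the additive term $C\,\eta(A)/t$ vanishing in the limit; the case $\eta(A)=0$ is trivial.

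The main obstacle is the first step: correctly identifying the discrepancy between the two Lagrangian evolutions as a single path-independent H\"ormander index and tracking the $-n/2$ normalization, so that the error is genuinely uniform in $s,x_0,t,t'$. The secondary technical point is securing the local integrability $\Theta\in L^1(\eta)$ that powers the continuous-time passage; I expect this to be reducible to assumption (ii) through the bounded cocycle relation and the semicontinuity structure of \eqref{squeeze}, but it is the place in the argument that requires the most care.
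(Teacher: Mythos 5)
Your algebraic step and your choice of main tool coincide with the paper's. The bounded-error cocycle identity is exactly the paper's inequality (\ref{subadd}) (with constant $2n$), and your derivation of it --- splitting by Juxtaposition and identifying the discrepancy between the evolution of the pushed-forward vertical and the evolution of the vertical itself as a path-independent H\"ormander index bounded by $2n$ --- is the same mechanism the paper uses for (\ref{czvmas}); that part is correct, and in fact more detailed than the paper, which simply attributes (\ref{subadd}) to Naturality and Juxtaposition. Your verification of the integral lower bound along integer times, via superadditivity of $t\mapsto\int\mu_t\,d\eta$ up to the constant, is also fine. The divergence is in how continuous time is treated. The paper applies the subadditive ergodic theorem in Derriennic's continuous-parameter formulation for flows \cite{der75}, quoted as (\ref{hyp}): its hypotheses involve only each-time integrability, subadditivity, and the integrated bound $\int f_t\,d\eta\geq -c(t+1)$, and its conclusion already contains a.e.\ convergence as $t\to+\infty$ through \emph{real} values together with the integral identity on $\phi$-invariant sets. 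You instead apply discrete-time Kingman to the time-one map and then interpolate by hand.

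The interpolation is where your proposal has a genuine gap: it requires $\Theta:=\sup_{0\leq\theta\leq 1}|\mu_\theta|\in L^1(\eta)$, and this is not implied by hypothesis (ii). Assumption (ii) controls each $\mu_\theta$ separately; the supremum over the uncountable family $\theta\in[0,1]$ need not even be measurable, and (\ref{squeeze}) does not help here --- being sandwiched between two measurable envelopes does not confer measurability (the measurability of $\mu_t$ itself is presupposed by (ii), not deduced from (\ref{squeeze})), and there is no monotonicity or continuity in $\theta$ that would reduce the supremum to a countable family, since $\theta\mapsto\mu_\theta(s,x)$ is a jump function. Nor can $\Theta$ be dominated pointwise by finitely many of the $\mu_\theta$: by Juxtaposition, $\mu_\theta=\mu_1-\mu(\lambda^U|_{[s+\theta,s+1]},\lambda_0)$, and the tail index can be arbitrarily large at points where the linearized flow winds and then unwinds within time one, even though $\mu_1$ stays small. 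Attempts to recover control from the cocycle bound only give circular inequalities such as $\int|\mu_\theta|\,d\eta\leq\int|\mu_1|\,d\eta+\int|\mu_{1-\theta}|\,d\eta+\mathrm{const}$, which never yield a bound uniform in $\theta$. Since $\Theta\circ\phi_n/n\to 0$ is what powers your passage from integer to real times, the flow-invariance of $\hat\mu$, the a.e.\ convergence along real $t$, and the continuous-time limit in the invariant-set identity all remain unproved in your scheme. This is exactly the difficulty that the paper's choice of tool sidesteps: Derriennic's flow version asks only for one-sided, integrated information about $\int f_t\,d\eta$, never for a pointwise two-sided supremum over small times. (To be fair, even there the check of $\int f_t\,d\eta\geq-c(t+1)$ at non-integer $t$ needs a uniform lower bound on $\int\mu_\theta\,d\eta$ for $\theta\in[0,1)$, a point the paper dismisses as easy; but that is an integrated one-sided requirement, far weaker than the $\Theta\in L^1(\eta)$ your route cannot avoid.)
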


The quantity $\hat{\mu}(s,x_0)$ is the {\em asymptotic Maslov index} of
the point $(s,x_0)$, and the quantity
\[
\hat{\mu}(\eta) := \int_{\T \times T^*M} \hat{\mu} (s,x_0) \, d\eta(s,x_0) =
\lim_{t\rightarrow +\infty} \frac{1}{t} \int_{\T \times T^*M}
\mu_t(s,x_0) \, d\eta(s,x_0)
\]
is the {\em asymptotic Maslov index} of the invariant measure
$\eta$. The function $\mu$ is bounded on compact subsets of $\R \times
\T \times T^*M$, so the summability assumption (ii) is trivially
satisfied when $\eta$ has compact support.

Theorem \ref{erg} is an easy consequence of Kingman subadditive ergodic
theorem. In Derriennic's formulation \cite{der75}, this theorem
says that if $\phi_t$  is a measure preserving flow on the
probability space $(\Omega,\mathcal{F},\eta)$ and $f_t$ is
one-parameter family of real valued functions in
$L^1(\Omega,\mathcal{F},\eta)$ satisfying
\begin{equation}
\label{hyp}
\int_{\Omega} f_t(\omega) \, d\eta (\omega) \geq -c(t+1), \quad
f_{s+t} \leq f_s + f_t \circ \phi_s, \quad \forall t,s\in [0,+\infty),
\end{equation}
for some positive constant $c$,
then the limit of $f_t/t$ exists $\eta$-almost everywhere, and the
integral of such a limit over any $\phi$-invariant
$A\in \mathcal{F}$ coincides with the limit of the integral of
$f_t/t$ over $A$.
The Naturality and the Juxtaposition properties of the Maslov index imply that
\begin{equation}
\label{subadd}
|\mu_{t+t'}(s,x) - \mu_t(s,x) - \mu_{t'}(\phi_t(s,x))| \leq 2n,
\end{equation}
It easily follows that, if $\Omega:=\T \times T^*M$ and $\mathcal{F}$
is its Borel sigma-algebra, the one-parameter family of functions
\[
f_t(s,x) := \mu_t(s,x) + 2n
\]
satisfies the hypotheses (\ref{hyp}) of the subadditive ergodic
theorem. It is also possible to derive Theorem \ref{erg}
from the standard Birkhoff ergodic theorem, by lifting the dynamical
system $\phi$ to the Lagrangian bundle of $T^*M$, as in
\cite{cgip03}. By means of such a lift, the asymptotic Maslov index
can be seen as a special case of Schwartzman asymptotic cycles, see
\cite{sch57}.

In the case of an autonomous Hamiltonian, one can work with the phase
space $T^*M$ instead of $\T \times T^*M$, and it is natural to
apply the analogue of Theorem \ref{erg} to the invariant measure
induced by the Liouville measure on each energy surface.

In the case of a time-periodic Hamiltonian, it is better to think of
an invariant measure as a generalized orbit. Let us examine the case
of an invariant measure associated to a $k$-periodic orbit, with $k$ a
positive integer.
We can identify a $k$-periodic orbit $x:\R/k\Z \rightarrow T^*M$ of
$X_H$ with the invariant probability measure $\eta$ on $\T \times
T^*M$ defined by
\[
\eta (A) = \frac{1}{k} |\set{t\in [0,k]}{(t,x(t))\in A}|,
\]
for every Borel subset of $\T \times T^*M$, where $|\cdot|$ denotes
the Lebesgue measure on $\R$. We recall that the $k$-periodic orbit
$x$ has a well defined Conley-Zehnder index
\[
\mu_{CZ}^k(x):= \mu_{CZ}(\Phi),
\]
where $\Phi:[0,k] \rightarrow \mathrm{Sp}(2n)$ is the path in the
symplectic group
$\mathrm{Sp}(2n)$ obtained by conjugating $t \mapsto D_x
\varphi_t(0,x(0))$ by a symplectic trivialization preserving the
vertical subbundle (see for instance \cite{sz92}, \cite{web02}, or
\cite{as06}). By (\ref{czvmas}),
\[
|\mu_{CZ}^k(x) - \mu_k(0,x(0))| = |\mu_{CZ}(\Phi) -
 \mu(\Phi\lambda_0,\lambda_0)| \leq 2n.
\]
From this estimate we deduce that the asymptotic Maslov index of the
invariant probability measure $\eta$ coincides with the {\em Bott
  index} of the $k$-periodic orbit $x$,
\[
\hat{\mu} (\eta) = \lim_{h\rightarrow +\infty}
\frac{\mu^{hk}_{CZ}(x)}{hk}.
\]
Then the inequality (\ref{bott}) with $h=1$ yields the estimate
\begin{equation}
\label{bott2}
|\mu_{CZ}^k(x) - k \, \hat{\mu}(\eta)| \leq 2n.
\end{equation}

Other interpretations of the asymptotic Maslov index are possible. For
instance, if one fixes a metric on $M$, one can consider the polar
decomposition for the differential of the flow. The orthogonal part in
this decomposition is actually unitary with respect to the almost
complex structure associated to the metric, and one can recover the
asymptotic Maslov index by taking the asymptotic time average of the
logarithm of the complex determinant of these unitary
isomorphisms. This is the original approach adopted by Ruelle in
\cite{rue85} when the phase space is $\R^{2n}$.

\section{The continuity of the asymptotic Maslov index}

We denote by $\mathscr{P}(\T\times T^*M)$ the space of Borel
probability measures on $\T \times T^*M$. We recall that a sequence
$(\eta_j)\in \mathscr{P}(\T \times T^*M)$ {\em narrowly} converges to
a measure $\eta\in \mathscr{P}(\T \times T^*M)$ if it converges to
$\eta$ in the weak-* topology of the dual space of $C_b(\T \times
T^*M)$, that is if
\[
\lim_{j\rightarrow +\infty}
\int_{\T \times T^*M} f\, d\eta_j = \int_{\T \times T^*M}
f\, d\eta,
\]
for every bounded continuous function $f$ on $\T\times T^*M$. The
narrow convergence is induced by a metrizable topology, so we do not
have to distinguish between continuity and sequential continuity. The
narrow limit of an invariant measure is invariant.

As observed by Ruelle \cite{rue85}, the asymptotic Maslov index is
continuous with respect to the narrow topology. We recall that a Borel
function $f$ is said to be uniformly integrable with respect to the
sequence of measures $(\eta_j)$ if
\[
\lim_{c\rightarrow +\infty} \sup_{j\in \N}\int_{\{|f|\geq c\}} |f| \, d\eta_j = 0.
\]

\begin{thm}
\label{cont}
Assume that $(\eta_j)\subset \mathscr{P}(\T \times T^*M)$ is a
sequence of $\phi$-invariant probability measures which narrowly
converges to $\eta$, and that for
every $t\geq 0$ the function $\mu_t$ is uniformly integrable with
respect to the sequence of measures $(\eta_j)$.
Then $(\hat{\mu}(\eta_j))$ converges to $\hat{\mu}(\eta)$.
\end{thm}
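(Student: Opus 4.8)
The plan rests on two observations: a uniform convergence rate for the ergodic averages that holds simultaneously for \emph{every} admissible invariant measure, and a one-sided control of the integrals of the discontinuous functions $\mu_t$ under narrow convergence.

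First I would upgrade the subadditivity estimate (\ref{subadd}) into a quantitative rate. Integrating (\ref{subadd}) against an arbitrary $\phi$-invariant probability measure $\nu$ for which the $\mu_t$ are summable, and using invariance to replace $\int \mu_{t'}\circ\phi_t\,d\nu$ by $\int\mu_{t'}\,d\nu$, the quantities $a_m:=\int\mu_m\,d\nu$ (taken along integers $m$) satisfy $|a_{m+m'}-a_m-a_{m'}|\le 2n$. Applying Fekete's subadditive lemma to $a_m+2n$ and its superadditive counterpart to $a_m-2n$ gives at once
\[
\Bigl|\hat{\mu}(\nu)-\frac1m\int\mu_m\,d\nu\Bigr|\le\frac{2n}{m}\qquad\text{for every integer }m\ge 1,
\]
with a constant $2n$ independent of $\nu$. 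This is the decisive point: the same rate applies to $\eta$ and to all the $\eta_j$, each of which satisfies the summability hypothesis because uniform integrability bounds $\sup_j\int|\mu_m|\,d\eta_j$.

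Next, for a fixed integer $m$ I would compare $\int\mu_m\,d\eta_j$ with $\int\mu_m\,d\eta$. The obstacle is that $\mu_m$ is neither lower nor upper semicontinuous, so narrow convergence cannot be tested against it directly. I would circumvent this using the envelopes of (\ref{squeeze}): since $\underline{\mu}_m\le\mu_m\le\overline{\mu}_m$ with $0\le\overline{\mu}_m-\underline{\mu}_m\le 2n$, both envelopes differ from $\mu_m$ by at most $2n$, hence are uniformly integrable with respect to $(\eta_j)$ as well. Truncating $\overline{\mu}_m$ at level $c$ produces a \emph{bounded} upper semicontinuous function; uniform integrability controls the error incurred by the truncation uniformly in $j$ by a quantity tending to $0$ as $c\to+\infty$, while the portmanteau inequality for bounded upper semicontinuous functions gives the $\limsup$ bound on the truncated integrals. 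Letting $c\to+\infty$ and using $\mu_m\le\overline{\mu}_m\le\mu_m+2n$ yields the one-sided estimate $\limsup_j\int\mu_m\,d\eta_j\le\int\mu_m\,d\eta+2n$; the symmetric argument with the lower semicontinuous $\underline{\mu}_m$ gives $\liminf_j\int\mu_m\,d\eta_j\ge\int\mu_m\,d\eta-2n$.

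Finally I would combine the two ingredients. The uniform rate gives $\hat{\mu}(\eta_j)\le\frac1m\int\mu_m\,d\eta_j+\frac{2n}{m}$ and $\frac1m\int\mu_m\,d\eta\le\hat{\mu}(\eta)+\frac{2n}{m}$; feeding in the one-sided bound on $\limsup_j\int\mu_m\,d\eta_j$ yields $\limsup_j\hat{\mu}(\eta_j)\le\hat{\mu}(\eta)+6n/m$, and symmetrically $\liminf_j\hat{\mu}(\eta_j)\ge\hat{\mu}(\eta)-6n/m$. Since $m$ is an arbitrary positive integer, letting $m\to+\infty$ makes all the $O(n/m)$ slack disappear and gives $\lim_j\hat{\mu}(\eta_j)=\hat{\mu}(\eta)$. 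The step I expect to require the most care is the middle one: at a fixed $m$ one cannot hope for $\int\mu_m\,d\eta_j\to\int\mu_m\,d\eta$, both because of the genuine discontinuity of $\mu_m$ and because the gap $2n$ between its envelopes is irreducible at finite $m$. What rescues the proof is that this gap is exactly of the size already tolerated by the subadditive structure, so that dividing by $m$ and passing to the limit renders it harmless.
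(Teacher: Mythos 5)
Your proposal is correct and follows essentially the same strategy as the paper's proof: a uniform rate $\bigl|\hat{\mu}(\nu)-\frac{1}{t}\int\mu_t\,d\nu\bigr|\le 2n/t$ obtained by integrating the subadditivity estimate (\ref{subadd}) (the paper iterates and lets $h\to\infty$ where you invoke Fekete's lemma, a cosmetic difference), combined with the semicontinuous envelopes of (\ref{squeeze}) and narrow convergence to compare $\int\mu_t\,d\eta_j$ with $\int\mu_t\,d\eta$ up to the error $2n$. Your truncation-plus-portmanteau argument is simply an inline proof of the semicontinuity lemma for uniformly integrable functions that the paper cites from \cite{ags05} (Lemma 5.1.7), so the two proofs coincide in substance.
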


The following proof is adapted from the appendix in \cite{rue85}.
See \cite{cgip03} for a proof using the already mentioned lift to the
Lagrangian bundle of $T^*M$.

\begin{proof}
Set $M_t(\eta):= \int \mu_t \, d\eta$, so that
\begin{equation}
\label{limi}
\lim_{t\rightarrow +\infty} \frac{M_t(\eta)}{t} = \hat{\mu}(\eta).
\end{equation}
Integrating the inequality (\ref{subadd}) with
respect to a $\phi$-invariant probability measure $\eta$ we obtain
\[
|M_{t+t'}(\eta) - M_t(\eta) - M_{t'}(\eta)| \leq 2n, \quad \forall
 t,t'\in \R.
\]
It easily follows that
\[
|M_{ht}(\eta) - h M_t(\eta)| \leq 2(h-1)n, \quad \forall h\in \N, \;
 h\geq 1, \; \forall t\in \R.
\]
Then, if $t>0$,
\[
\left| \frac{M_{ht}(\eta)}{ht} - \frac{M_{t}(\eta)}{t} \right| \leq
\frac{2(h-1)n}{ht},
\]
and taking the limit for $h\rightarrow +\infty$, we obtain the estimate
\begin{equation}
\label{unif}
\left| \hat{\mu}(\eta) - \frac{M_{t}(\eta)}{t} \right| \leq
\frac{2n}{t},
\end{equation}
which shows that the limit (\ref{limi}) is uniform over the space of
all $\phi$-invariant probability measures. Fix a number $t\geq 0$.
By (\ref{squeeze}), the
uniform integrability of $\mu_t$ implies that of
$\underline{\mu}_t$. Since $\underline{\mu}_t$ is also
lower-semicontinuous, from the semicontinuity property of narrow limits
(see Lemma 5.1.7 in \cite{ags05}) we find
\[
\liminf_{j\rightarrow +\infty} M_t(\eta_j) \geq \liminf_{j\rightarrow
  +\infty} \int_{\T \times T^*M} \underline{\mu}_t \, d\eta_j \geq
  \int_{\T \times T^*M} \underline{\mu}_t \, d\eta \geq M_t(\eta) -
  2n.
\]
Similarly,
\[
\limsup_{j\rightarrow +\infty} M_t(\eta_j) \leq \limsup_{j\rightarrow
  +\infty} \int_{\T \times T^*M} \overline{\mu}_t \, d\eta_j \leq
  \int_{\T \times T^*M} \overline{\mu}_t \, d\eta \leq M_t(\eta) +
  2n.
\]
The above inequalities together with the uniform estimate
(\ref{unif}) imply that $(\hat{\mu}(\eta_j))$ converges to
$\hat{\mu}(\eta)$.
\end{proof} \qed

\section{The range of the asymptotic Maslov index}

We denote by $I(H)\subset \R$ the interval consisting of the
asymptotic Maslov indices assumed by all the invariant probability
Borel measures $\eta$ for which $\mu_t$ is integrable for every $t\geq 0$.
In general, a Hamiltonian vector field may have no invariant
probability measures, so $I(H)$ could be empty.

Choose as $H$ the energy of the geodesic flow on a compact 
Riemannian manifold $M$ with non-positive sectional curvature. 
Since every geodesic on such a
manifold has no conjugate points, the Maslov index of every orbit
of the corresponding Hamiltonian system is zero. Therefore, in this
case the interval $I(H)$ is just the singleton $\{0\}$. Another
interesting example is the pendulum, with configuration space
$M=S^1$. The unstable equilibrium and the homoclinic points have
asymptotic Maslov index zero, as well as the non-contractible
periodic orbits. On the other hand, the contractible periodic orbits
have positive asymptotic Maslov index, growing from zero - for the
contractible periodic orbits which are close to the homoclinic orbits -
to a positive maximum - achieved at the stable equilibrium point. 
This follows from the fact that for such orbits the vertical subspace 
makes exactly one complete turn in one period, and from the fact that
the period increases with the amplitude. Therefore,
in this case $I(H)$ is the bounded interval $[0,a]$, where $a>0$ is the
asymptotic Maslov index of the stable equilibrium.

In both these examples the fundamental group of $M$ is infinite. 
Manifolds with finite
fundamental group exhibit a completely different behavior: for a
fairly general class of Hamiltonians $H$ the interval  $I(H)$ contains
the half-line $[0,+\infty)$.

We denote by $Y:T^*M \rightarrow TT^*M$ the Liouville vector field on
$T^*M$, whose expression in local coordinates is
\[
Y(q,p) = \sum_{j=1}^n p^j \partial_{p^j}.
\]

\begin{thm}
\label{dense}
Assume that the compact manifold $M$ has finite fundamental group, and
that the Hamiltonian $H\in C^{\infty}(\T\times T^*M)$ satisfies:
\begin{enumerate}
\item The action integrand $DH(t,x)[Y(x)] - H(t,x)$ is coercive on
  $\T \times T^*M$.
\item The Hamiltonian $H(t,x)$ is superlinear on the fibers of $T^*M$,
  uniformly in $t\in \T$.
\item The Hamiltonian vector field $X_H$ generates a global flow.
\end{enumerate}
Then $I(H)$ contains the half-line $[0,+\infty)$. More precisely,
the set of asymptotic Maslov indices assumed by the contractible
closed orbits with integer period is dense in $[0,+\infty)$.
\end{thm}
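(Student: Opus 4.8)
The plan is to prove Theorem \ref{dense} by constructing, for a dense set of target values $r \in [0,+\infty)$, contractible closed orbits of integer period whose asymptotic Maslov index is close to $r$. The natural strategy is to produce periodic orbits as critical points of the Hamiltonian action functional on the space of contractible loops, using Floer-theoretic machinery, and then to control their Conley--Zehnder (equivalently, by \eqref{bott2}, their asymptotic Maslov) index. Since the finite fundamental group hypothesis ensures that the universal cover $\widetilde{M}$ is compact and that contractible loops lift, the relevant Floer homology of $T^*M$ should be nontrivial in arbitrarily high degree, and one expects to extract orbits realizing a prescribed index by exploiting the grading.

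\emph{First} I would reduce the superlinear/coercive Hamiltonian to a situation where the variational problem is well-posed: the superlinearity assumption (ii) guarantees that $H$ is Fenchel-dual to a Tonelli-type Lagrangian, so $k$-periodic orbits of $X_H$ correspond to critical points of the Lagrangian action on the loop space, and the coercivity assumption (i) on the action integrand $DH[Y]-H$ is exactly what makes the action bounded below and the sublevel sets well-behaved. \emph{Second}, I would invoke the result from \cite{af07} cited in the introduction, which is designed to handle precisely these general growth conditions; this is presumably what allows one to set up a Floer or Morse theory for $H$ despite the noncompactness of the fibers and the lack of energy conservation. \emph{Third}, for a fixed large integer period $k$, the existence of contractible $k$-periodic orbits with controlled Morse/Conley--Zehnder index should follow from the fact that $T^*M$ has nontrivial homology (the zero section is $M$, which is compact), giving critical points whose index $\mu_{CZ}^k$ grows with $k$; by \eqref{bott2} these have asymptotic Maslov index approximately $\mu_{CZ}^k/k$.

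\emph{The key interpolation step}, and what I expect to be the main obstacle, is arranging that the asymptotic Maslov indices obtained this way are \emph{dense} in $[0,+\infty)$ rather than landing on some sparse or lacunary set. The estimate \eqref{bott2}, namely $|\mu_{CZ}^k(x) - k\,\hat\mu(\eta)| \leq 2n$, shows that $\hat\mu(\eta) = \mu_{CZ}^k(x)/k + O(1/k)$, so if for each large $k$ one can produce a contractible $k$-periodic orbit whose Conley--Zehnder index $\mu_{CZ}^k$ lies within a bounded distance of any prescribed integer $m = m(k)$, then choosing $m/k \to r$ yields a sequence of asymptotic Maslov indices converging to $r$. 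The difficulty is thus to show that the \emph{set} of achievable integer indices $\mu_{CZ}^k$, as the orbit and the period vary, is sufficiently rich --- ideally that for each period $k$ and each integer $m$ in a suitable range $[0, Ck]$ there is a contractible orbit of that index. I would attack this by a continuity/deformation argument: deform the Hamiltonian through a one-parameter family interpolating between a model for which the indices are explicitly computable (e.g. a fiberwise-quadratic Hamiltonian, whose linearized flow is a harmonic oscillator with known Conley--Zehnder index) and the given $H$, and use the spectral-flow continuity of the index together with \eqref{unif} to track how $\hat\mu$ varies; since the index changes by controlled amounts, its image fills out a whole interval.

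\emph{Finally}, I would take the rescaling limit: fixing $r \geq 0$ and producing for each large $k$ an orbit $x_k$ with $\mu_{CZ}^k(x_k)/k \to r$, the bound \eqref{bott2} delivers $\hat\mu(\eta_k) \to r$, establishing density. The finite fundamental group enters crucially here, because it guarantees that the contractible loops form a component on which the action functional behaves like the one on a simply connected space, so that no topological obstruction (of the kind present in the non-positively curved examples with $I(H)=\{0\}$) can force the index to vanish. I expect the technical heart of the argument to be the verification that the Floer/Morse-theoretic count from \cite{af07} is nonempty in the required range of degrees for every large period, which is where the growth assumptions (i)--(iii) are consumed.
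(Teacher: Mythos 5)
Your proposal follows the paper's general skeleton (existence of periodic orbits via the Floer--minimax machinery of \cite{af07}, the estimate (\ref{bott2}) to convert Conley--Zehnder indices into asymptotic Maslov indices, and lifting to the compact universal cover), but it has a genuine gap at the topological heart of the argument. You claim that the existence of contractible $k$-periodic orbits with large, controlled index ``should follow from the fact that $T^*M$ has nontrivial homology (the zero section is $M$).'' This cannot work: $H_*(T^*M)\cong H_*(M)$ vanishes above degree $n$, so it can only detect orbits whose index lies in a bounded range, and dividing by $k$ would only produce asymptotic Maslov indices accumulating at $0$. The input the paper actually uses is Sullivan's theorem \cite{sul75}: for a compact simply connected $M$, the \emph{free loop space} $\Lambda(M)$ has nonvanishing Betti numbers in a diverging sequence of degrees $h_j$. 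Theorem 6.3 of \cite{af07} then gives, for every integer $k\geq 1$ and every such $h_j$, a contractible $k$-periodic orbit $x$ with $|\mu_{CZ}^k(x)-h_j|\leq 2n$; density in $[0,+\infty)$ follows simply because the set $\set{h_j/k}{j\in\N,\ k\geq k_0}$ is dense (as $h_j\to\infty$), combined with (\ref{bott2}). No claim that \emph{every} integer $m\in[0,Ck]$ is realized as an index is needed.

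Your proposed substitute for this input --- deforming $H$ to a fiberwise-quadratic model and invoking ``spectral-flow continuity'' of the index --- does not close the gap. Periodic orbits do not persist under deformations of the Hamiltonian (they are born and die in bifurcations), so there is no continuity statement for the indices of orbits along the deformation; the only structure that is invariant under such continuations is Floer homology, and for cotangent bundles this is isomorphic to $H_*(\Lambda(M))$, which returns you precisely to Sullivan's theorem as the unavoidable ingredient. A further, more minor, error: superlinearity (ii) does \emph{not} imply convexity in $p$, so $H$ need not be Fenchel-dual to any Tonelli Lagrangian; conditions (i)--(iii) are strictly weaker than the Tonelli setting, which is exactly why the paper works directly with the Floer equation for $H$ rather than with a Lagrangian action functional.
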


The function $DH[Y]-H$ is called action integrand because it
coincides with the integrand of the Hamiltonian action along
orbits. Condition (i) is a sort of radial convexity 
assumption for large values of $|p|$, and it is satisfied for instance 
by a Hamiltonian which is the Fenchel transform of a Tonelli
Lagrangian (see the next section). Condition (i), together with
(iii), implies a priori estimates for orbits with bounded action.
The superlinearity assumption (ii) guarantees that the periodic orbit
problem ``sees'' the whole topology of the free loop space of $M$. A
Hamiltonian $H$ with linear growth would typically have periodic
orbits with bounded Conley-Zehnder index, and it would produce
a bounded interval $I(H)$.

\medskip 

\begin{proof}
Assume first that $M$ is simply connected. Then the free loop space of
$M$ has infinitely many non-vanishing Betti numbers, as proved by
Sullivan in \cite{sul75}. So there is a
diverging sequence of natural numbers $(h_j)$ such that the
$h_j$-th singular homology group of the free loop space of $M$ does
not vanish.

Let $r \in [0,+\infty)$ and fix some $\epsilon>0$. The fact that
the sequence $(h_j)$ diverges easily implies that for every $k_0>0$
the set
\[
\set{\frac{h_j}{k}}{j\in \N, \; k\in \N, \; k\geq k_0}
\]
is dense in $[0,+\infty)$.
Therefore, we can find $j\in \N$ and $k\in \N$, $k>0$, such that
\[
\left| r - \frac{h_j}{k} \right| < \frac{\epsilon}{3}, \quad
\frac{2n}{k} < \frac{\epsilon}{3}.
\]

Since the $h_j$-th singular homology group of the free loop space of
$M$ does not vanish, and the Hamiltonian $H$ satisfies the above
conditions (i), (ii), and (iii), the Hamiltonian vector field $X_H$
has at least one
$k$-periodic orbit $x$ with Conley-Zehnder index $\mu_{CZ}(x)$ satisfying
\[
|\mu_{CZ}^k(x) - h_j| \leq 2n.
\]
This existence statement is proved by minimax arguments on the
space of finite energy solutions of the Floer equation on the cylinder,
see \cite[Theorem 6.3]{af07}. Let $\eta$ be the probability measure on
$\T \times T^*M$ associated to the $k$-periodic orbit $x$. By (\ref{bott2}),
\[
|\mu_{CZ}^k(x) - k \hat{\mu}(\eta) | \leq 2n,
\]
hence we conclude that
\begin{eqnarray*}
|\hat{\mu} (\eta) - r| \leq \frac{1}{k} |k \hat{\mu} (\eta) -
 \mu_{CZ}^k(x)| + \frac{1}{k} | \mu_{CZ}^k(x) - h_j | +
 \left|\frac{h_j}{k} - r \right| \\ \leq \frac{2n}{k} +
 \frac{2n}{k} + \frac{\epsilon}{3} < \epsilon,
\end{eqnarray*}
proving the density statement for simply connected configuration
spaces. If the fundamental group of $M$ is finite, the universal covering of
$M$ is compact. Therefore, the asymptotic Maslov indices of the orbits
with integer period of the Hamiltonian vector field lifted to such a
compact manifold are dense in $[0,+\infty)$. Projecting a periodic
orbit of the lifted system onto $M$ produces a contractible
periodic orbit with the same asymptotic Maslov index.
\end{proof} \qed

\section{Convex Hamiltonians}
\label{sect:convex}

Assume that the Hamiltonian $H$ satisfies the Tonelli conditions:
it is $C^2$-strictly convex and superlinear on the fibers of $T^*M$,
meaning that
\[
\partial_{pp} H(t,q,p) >0, \quad \lim_{|p| \rightarrow +\infty}
\frac{H(t,q,p)}{|p|} = +\infty,
\]
where the norm $|\cdot|$ comes from some Riemannian metric on $M$ (by the
compactness of $\T \times M$, the superlinearity condition does not
depend on the choice of this metric).
Then the Fenchel transform defines a smooth Lagrangian on $\T \times
TM$,
\[
L(t,q,v) := \max_{p\in T_q^*M} \left( p[v] - H(t,q,p) \right),
\]
which satisfies the Tonelli conditions, that is it is $C^2$-strictly
convex and superlinear on the fibers of $TM$. Conversely, a Tonelli
Lagrangian defines by the dual Fenchel transform a Tonelli Hamiltonian.

The Legendre duality defines a loop of diffeomorphisms from the tangent
bundle to the cotangent bundle,
\[
\mathcal{L} : \T \times TM \rightarrow \T \times T^* M, \quad
(t,q,v) \mapsto (t,q,D_v L(t,q,v)),
\]
such that
\[
L(t,q,v) = p[v] - H(t,q,p) \quad \iff \quad (t,q,p) =
\mathcal{L}(t,q,v).
\]
A curve $x(t)=(q(t),p(t))$ in the cotangent bundle is an orbit of the
Hamiltonian vector field $X_H$ if and only if the curve $q:\R \times
M$ is a solution of the Euler-Lagrange equation
\begin{equation}
\label{eulag}
\frac{d}{dt} \partial_v L (t,q(t),\dot{q}(t)) = \partial_q
L (t,q(t),\dot{q}(t)),
\end{equation}
if and only if for every $s<t$ the curve $q|_{[s,t]}$ is an extremal of the
Lagrangian action
\[
\mathbb{A}_s^t(\gamma) = \int_s^t
L(\tau,\gamma(\tau),\dot{\gamma}(\tau))\, d\tau,
\]
among all absolutely continuous curves $\gamma$ with end-points
$\gamma(s)=q(s)$ and $\gamma(t)=q(t)$.  The
second variation of $\mathbb{A}_s^t$ at the extremal $q$ is a
continuous symmetric bilinear form on the Hilbert space
$W$ consisting of the $W^{1,2}$ sections $\xi$ of $q^*(TM)$
such that $\xi(s)=0$, $\xi(t)=0$. The $C^2$-strict convexity of $L$
guarantees that this bilinear form is a compact perturbation of a
coercive form. Therefore, its Morse index $i_s^t(q)$ and its nullity
$\nu_s^t(q)$ are finite. By the classical relationship between the Morse
index and the Maslov index, we have the identities
\[
\underline{\mu}_t(s,x_0) = i_s^{s+t}(q), \quad
\overline{\mu}_t(s,x_0)  = i_s^{s+t}(q) + \nu_s^{s+t}(q), \quad
\forall s\in [0,1),\; t\in [0,+\infty),
\]
where $x_0$ is the point on $T^*M$ whose orbit starting at time $s$
corresponds to the
curve $t\mapsto (\gamma(t),\dot\gamma(t))$ via the Legendre transform.
See for instance \cite{dui76}. Therefore, in the case of convex
Hamiltonians the asymptotic Maslov index can be interpreted as an
asymptotic Morse index,
\[
\hat{\mu}(s,x_0) = \lim_{t\rightarrow +\infty} \frac{i_s^{s+t}(q)}{t} =
\lim_{t\rightarrow +\infty} \frac{i_s^{s+t}(q) + \nu_s^{s+t}(q)}{t},
\]
and this number is always non-negative.
In this context, a corollary of Theorem \ref{dense} is that if $L$ is
a Tonelli Lagrangian generating a global flow on
the tangent bundle of a compact manifold with finite fundamental
group, the asymptotic Morse indices of closed orbits with integer
period are dense in $[0,+\infty)$. In this case, the range of the
asymptotic Morse index is exactly $[0,+\infty)$.

Now we restrict our attention to electro-magnetic systems, produced by
Lagrangians of the form
\begin{equation}
\label{lag}
L(t,q,v)= \frac{1}{2}\langle v,v\rangle + \langle A(t,q),v\rangle
- V(t,q),
\end{equation}
where $\langle \cdot,\cdot\rangle$ is a Riemannian structure on $M$ -
the kinetic energy - $A$ is a smooth vector field - the magnetic
potential - and $V$ is a smooth real function - the scalar potential.
Both $A$ and $V$ are 1-periodic in time. In this case, the Legendre
transform is
\begin{equation}
\label{legendre}
\mathcal{L}(t,q,v) = (t,q,p), \quad\mbox{where } p[w] = \langle v + A(t,q),
w \rangle, \quad \forall w\in T_q M,
\end{equation}
and the Hamiltonian is
\[
H(t,q,p) = \frac{1}{2} \langle p - \alpha(t,q) , p - \alpha(t,q)
\rangle + V(t,q),
\]
where $\langle \cdot,\cdot\rangle$ denotes the inner product induced
on $T^*M$ and $\alpha$ is the 1-form given by
\[
\alpha(t,q)[v] =  \langle A(t,q),v\rangle, \quad \forall v\in T_q M.
\]
The corresponding Euler-Lagrange equation can be written in the form
\[
\nabla_t \dot\gamma + \partial_t A(t,\gamma) - (\nabla
A(t,\gamma)^* - \nabla A(t,\gamma)) \dot\gamma + \grad V(t,\gamma) = 0,
\]
where $\nabla$ is the covariant derivative associated to the metric of $M$,
$\nabla_t$ denotes the covariant derivative along $\gamma$, the
superscript $*$ denotes the metric adjoint, and $\grad$ denotes the
gradient. Therefore, if $\gamma$ is a solution of the Euler-Lagrange
equation, we have the estimate
\[
\frac{d}{dt} |\dot\gamma(t)|^2 = 2 \langle \nabla_t \dot\gamma(t),
\dot\gamma(t) \rangle \leq c_1 (1+|\dot\gamma(t)|^2)
\]
for some constant $c_1$. It follows that the Euler-Lagrange flow on
$\T \times TM$ - hence also the Hamiltonian flow on $\T \times T^*M$ -
is globally defined, and
\begin{equation}
\label{cresc}
1 + |\dot\gamma(t)|^2 \leq (1+ |\dot\gamma(s)|^2) e^{c_1 |t-s|}, \quad
 \forall s,t\in \R.
\end{equation}
The second variation of the Lagrangian action functional at an
extremal curve $\gamma$ on the time interval $[s,s+t]$ is
\begin{eqnarray*}
d^2 \mathbb{A}_s^{s+t}(\gamma) [\xi,\xi] = \int_s^{s+t} \Bigl( \langle
\nabla_{\tau} \xi,\nabla_{\tau} \xi \rangle + 2 \langle \nabla_{\xi}
A, \nabla_{\tau} \xi \rangle \\ + \langle \hess A [\xi,\xi] , \dot\gamma
\rangle - \hess V [\xi,\xi] - \langle R(\xi,\dot\gamma)\xi, \dot\gamma
+ A \rangle \Bigr)\, d\tau,
\end{eqnarray*}
where $\hess$ denotes the Riemannian Hessian and $R$ is the Riemann
tensor. The above formula shows that there exists a number $c_2$ such that
\[
d^2 \mathbb{A}_s^{s+t}(\gamma) [\xi,\xi] \geq \int_s^{s+t} \left(
\frac{1}{2}|\nabla_{\tau}
\xi|^2 - c_2 (1+ |\dot\gamma|^2) |\xi|^2 \right) \, d\tau.
\]
Together with (\ref{cresc}), this estimate implies that
\[
d^2 \mathbb{A}_s^{s+t}(\gamma) [\xi,\xi] \geq \frac{1}{2} \int_s^{s+t}
|\nabla_{\tau} \xi|^2\, d\tau  -
c_2  (1+ |\dot\gamma(s)|^2) e^{c_1 |t|} \int_s^{s+t} |\xi|^2  \, d\tau.
\]
Writing $\xi$ as $\xi = \sum_{j=1}^n \varphi_j \xi_j$, where
$\xi_1,\dots,\xi_n$ is an orthonormal frame along $\gamma$ built
by parallel transport, and $\varphi_1,\dots,\varphi_n$ are real-valued
functions, the quadratic form on the right-hand side equals
\[
\sum_{j=1}^n \left[ \frac{1}{2} \int_s^{s+t} |\dot\varphi_j|^2\, d\tau  -
c_2  (1+ |\dot\gamma(s)|^2) e^{c_1 |t|} \int_s^{s+t} |\varphi_j|^2  \, d\tau
\right].
\]
The Morse index of the above quadratic form on the Sobolev space
$W^{1,2}_0([s,s+t],\R^n)$ is
\[
n \left\lfloor \frac{|t|\sqrt{2 c_2  (1+ |\dot\gamma(s)|^2)} e^{c_1
    |t|/2}}{\pi} \right\rfloor.
\]
Therefore, the Morse index of a solution on a fixed time interval
grows at most linearly with the initial velocity: there exists a
positive function $t \mapsto c_3(t)$ such that
\[
i_s^{s+t}(\gamma) \leq c_3(t) (1+ |\dot\gamma(s)|).
\]
By the form (\ref{legendre}) of the Legendre transform, we deduce a
similar estimate for the function $\mu_t$:
\begin{equation}
\label{finfirst}
\mu_t(s,q_0,p_0) \leq c_4(t) (1+ |p_0|),
\end{equation}
for a suitable function $t \mapsto c_4(t)$.
In particular, if a $\phi$-invariant probability measure $\eta$ on
$\T \times T^*M$ has finite first moment, then the function $\mu_t$ is
in $L^1(\T \times T^*M,\eta)$, and the asymptotic Maslov index of
$\eta$ is well-defined.

\section{Electro-magnetic Lagrangians}

The aim of this section is to prove the following theorem:

\begin{thm}
\label{main}
Assume that the manifold $M$ is compact and has finite fundamental group.
Let $L\in C^{\infty}(\T
\times TM)$ be a Lagrangian of the form
(\ref{lag}). Let $r$ be a non-negative number. Then at least one of the
following statements holds:
\begin{enumerate}
\item There exists a contractible closed orbit with integer period and
asymptotic Maslov index $r$.
\item There exists a sequence of contractible closed orbits with
  integer minimal periods
  $k_j \rightarrow +\infty$ and asymptotic Maslov indices $\hat{\mu}_j
  \rightarrow r$. The corresponding sequence of probability
  measures on $\T \times TM$ narrowly converges to an invariant probability
  measure $\eta\in \mathscr{P}(\T \times TM)$ with finite second
  moment and asymptotic Maslov index $\hat{\mu}(\eta)=r$.
\end{enumerate}
In both cases, there is an invariant probability measure $\eta$ on $\T \times TM$ with finite second moment, asymptotic Maslov index $r$, and action bound
\begin{equation}
\label{actbou}
\mathbb{A}(\eta) := \int L(t,q,v) \, d\eta(t,q,v) \leq  C (1+r^2),
\end{equation}
where $C$ is a positive number.
\end{thm}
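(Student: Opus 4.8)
The plan is to realize $\eta$ as a narrow limit of the measures attached to the minimax orbits furnished by Theorem~\ref{dense}, and to extract the dichotomy from whether the minimal periods of those orbits stay bounded. By passing to the universal cover (compact, since $\pi_1(M)$ is finite, and onto which the metric, $A$ and $V$ lift) and projecting orbits and invariant measures back to $M$, I may assume throughout the construction that $M$ is simply connected: a closed orbit upstairs projects to a \emph{contractible} closed orbit of the same minimal period, and the lift being a local isometry preserves both the Lagrangian action and the asymptotic Maslov index.

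Following the proof of Theorem~\ref{dense}, I would fix $r\geq 0$, pick a diverging sequence of degrees $h_j$ with $H_{h_j}(\Lambda M)\neq 0$ and integers $k_j$ with $h_j/k_j\to r$, and obtain $k_j$-periodic orbits $x_j$ by minimax over a nonzero class in $H_{h_j}(\Lambda M)$, with $|\mu_{CZ}^{k_j}(x_j)-h_j|\leq 2n$. By (\ref{bott2}) the associated measures $\eta_j$ satisfy $|k_j\,\hat\mu(\eta_j)-h_j|\leq 4n$, so $\hat\mu_j:=\hat\mu(\eta_j)\to r$.

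The crux is an a priori bound on the action, and this is where Gromov's length estimate enters: the minimax class in $H_{h_j}(\Lambda M)$ admits a representing cycle all of whose loops have length at most $C(1+h_j)$. Reparametrizing such a loop at constant speed over the period $k_j$ produces a $k_j$-periodic test loop $\tilde\gamma$ of energy $\ell^2/(2k_j)\leq C^2(1+h_j)^2/(2k_j)$; bounding the magnetic term by $\|A\|_\infty\,\ell$ and the potential term by $\|V\|_\infty k_j$, the minimax inequality $\mathbb{A}_0^{k_j}(x_j)\leq\max\,\mathbb{A}_0^{k_j}$ over this cycle gives, after division by $k_j$,
\[
\mathbb{A}(\eta_j)=\frac1{k_j}\mathbb{A}_0^{k_j}(x_j)\leq \frac{C^2}{2}\Bigl(\frac{1+h_j}{k_j}\Bigr)^2+\|A\|_\infty C\,\frac{1+h_j}{k_j}+\|V\|_\infty\leq C'(1+r^2)
\]
for $j$ large, since $h_j/k_j\to r$. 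Completing the square in $L$ then yields $\tfrac14\int|v|^2\,d\eta_j\leq\mathbb{A}(\eta_j)+C''$, so the second moments of $(\eta_j)$ are uniformly bounded. I expect this action bound — the source of the quadratic growth in (\ref{actbou}) — to be the only genuinely non-routine input; everything downstream is soft.

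The uniform second-moment bound makes $(\eta_j)$ tight, so a subsequence converges narrowly to an invariant $\eta$; lower semicontinuity of $v\mapsto\tfrac12|v|^2$ and of the lower-bounded continuous integrand $L$ give $\eta$ a finite second moment and $\mathbb{A}(\eta)\leq\liminf\mathbb{A}(\eta_j)\leq C'(1+r^2)$, which is (\ref{actbou}). Passing through the Legendre transform $p=v+A$, the uniform bound on $\int|p|^2\,d\eta_j$ forces $\int_{\{|p|\geq R\}}|p|\,d\eta_j\leq R^{-1}\int|p|^2\,d\eta_j\to0$ uniformly in $j$, and via (\ref{finfirst}) together with $0\leq\mu_t$ this yields uniform integrability of each $\mu_t$; hence Theorem~\ref{cont} gives $\hat\mu(\eta)=\lim\hat\mu_j=r$. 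Finally I would settle the alternative. If the minimal periods of the $x_j$ are unbounded I pass to a subsequence with $k_j\to\infty$, landing in case (ii) with the limit $\eta$ just built. If instead they stay bounded, infinitely many share a common minimal period $k$; the uniform action bound gives uniform velocity bounds, so these $k$-periodic orbits subconverge uniformly to a $k$-periodic orbit $x_\infty$, and the continuity established via Theorem~\ref{cont} forces $\hat\mu(x_\infty)=r$, producing a contractible closed orbit of integer period and asymptotic Maslov index exactly $r$ — this is case (i). In both situations the narrow limit $\eta$ carries the asserted finite second moment, index $r$, and action bound, completing the proof.
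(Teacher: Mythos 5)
Your route is, in outline, the paper's own: minimax for the average action over a nonzero class in $H_{h_j}(\Lambda(M))$, index localization giving $|\mu_{CZ}^{k_j}(x_j)-h_j|\leq 2n$, a quadratic action bound coming from Gromov's length estimate, tightness and Prokhorov, uniform integrability of $\mu_t$ via (\ref{finfirst}), Theorem \ref{cont}, and a dichotomy on the minimal periods (your direct dichotomy replaces the paper's argument by contradiction, which is only cosmetic). But there is one genuine gap, and it sits exactly at the step you yourself call ``the only genuinely non-routine input''. You pass from Gromov's length-bounded representing cycle to the action bound by ``reparametrizing such a loop at constant speed over the period $k_j$''. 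This is done loop by loop; however, to invoke the minimax inequality $\mathbb{A}_{k_j}(x_j)=a_{k_j}(h_j)\leq \max_{\gamma\in K}\mathbb{A}_{k_j}(\gamma)$ you need the reparametrized family $K$ to be the support of a singular cycle that still represents a nonzero class in $H_{h_j}$, i.e.\ an element of $\Gamma_{k_j}(h_j)$. Constant-speed reparametrization is \emph{not} a continuous self-map of the loop space: it is undefined at constant loops and discontinuous at loops whose speed vanishes on part of the circle, and such loops cannot be excluded from Gromov's cycle. So ``the reparametrized cycle'' need not be a cycle at all, let alone a homologous one, and the minimax inequality cannot be applied to it. This is precisely the pitfall the paper warns about in Appendix A: the length bound on a representing cycle does not by itself give a speed or energy bound (the paper notes this very claim appears erroneously in \cite{pat99}).

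The gap is repairable, and the paper's Appendix A is the repair: instead of constant speed, one reparametrizes by ``slowly varying speed'', using $\sigma_{\gamma}(t)=\int_0^t\sqrt{1+|\dot\gamma|^2}\,ds\,\bigl/\int_0^1\sqrt{1+|\dot\gamma|^2}\,ds$, whose inverse $\tau_\gamma$ depends continuously on $\gamma$; the homotopy $F(\lambda,\gamma)(t)=\gamma(\lambda\tau_\gamma(t)+(1-\lambda)t)$ then carries Gromov's cycle to a homologous cycle whose loops satisfy $\|\dot\gamma_1\|_\infty\leq 1+L(\gamma)\leq (1+C_0)h$ (Corollary \ref{speed}). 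Once you have this speed-bounded cycle, your computation is correct and is in effect a compressed version of the paper's Lemmas \ref{lem1} and \ref{lem2}: rescaling time by the homeomorphism $\varphi_k(\gamma)(t)=\gamma(t/k)$ (no homological issue there) gives average action at most of order $(h_j/k_j)^2+1\leq C'(1+r^2)$ for large $j$. Everything downstream in your write-up --- the second-moment bound by completing the square, tightness, lower semicontinuity of $\mathbb{A}$, uniform integrability of $\mu_t$ from $0\leq\mu_t\leq c_4(t)(1+|p|)$, Theorem \ref{cont} giving $\hat\mu(\eta)=r$, and the two cases according to whether the minimal periods stay bounded --- agrees with the paper's proof and is sound.
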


When the Lagrangian $L$ does not depend on time, the above statement
can be made more precise: either there is a contractible closed orbit
with asymptotic Maslov index $r$, or there is a sequence of
contractible closed orbits with minimal periods $T_j\rightarrow
+\infty$ such that the corresponding probability measures on $TM$ narrowly
converge to an invariant probability measure which is supported on a
(compact) energy level and has asymptotic Maslov index $r$. See
Remark \ref{final} after the end of the proof.

\medskip

Let $W^{1,2}(\R/k\Z, M)$ be the space of $k$-periodic
curves in $M$ of Sobolev class $W^{1,2}$. This is an infinite
dimensional Hilbert manifold, and it has the homotopy type of
$\Lambda(M)$, the free loop space of $M$. By the form (\ref{lag}) of
the Lagrangian, the (average) action functional
\[
\mathbb{A}_k(\gamma) := \frac{1}{k}
\int_0^k L(t,\gamma(t),\dot{\gamma}(t))\, dt
\]
is twice continuously differentiable on $W^{1,2}(\R/k\Z,
M)$. Its critical points are precisely the $k$-periodic solutions of
the Euler-Lagrange equation (\ref{eulag}), and the second
differential of $\mathbb{A}_k$ at a critical point $\gamma$ is a
compact perturbation of a coercive symmetric bilinear form on
$T_{\gamma} W^{1,2}(\R/k\Z, M)$. In particular, the Morse index
$m_k(\gamma)$ is finite. We denote by $m^*_k(\gamma)$ the large Morse
index of the critical point $\gamma$, that is the Morse index plus the
nullity. Since the elements of the kernel of the second differential
of $\mathbb{A}_k$ solve a second order ODE, there holds
\[
m_k(\gamma) \leq m_k^*(\gamma) \leq m_k(\gamma) + 2n.
\]
Furthermore, if $x$ is the $k$-periodic orbit on $T^*M$ corresponding
to $\gamma$ by the Legendre transform,
\[
m_k(\gamma) \leq \mu_{CZ}^k(x) \leq  m_k^*(\gamma),
\]
see \cite{dui76} and \cite{web02}.
Therefore, the asymptotic Maslov index of the invariant probability measure
$\eta$ associated to the $k$-periodic solution $\gamma$ can be
expressed in terms of the Morse indices of the Lagrangian action
functional by
\[
\hat{\mu}(\eta) = \lim_{h\rightarrow +\infty}
\frac{m_{hk}(\gamma)}{hk} = \lim_{h\rightarrow +\infty}
\frac{m_{hk}^*(\gamma)}{hk}.
\]
If we endow $W^{1,2}(\R/k\Z, M)$ with the complete
Riemannian metric
\[
\langle \xi,\eta \rangle_{W^{1,2}} = \int_0^k \left( \langle \xi,\eta
\rangle + \langle \nabla_t \xi, \nabla_t \eta \rangle \right)\, dt,
\quad \forall \xi,\eta\in T_{\gamma} W^{1,2}(\R/k\Z, M),
\]
the functional $\mathbb{A}_k$ satisfies the Palais-Smale condition on
$W^{1,2}(\R/k\Z, M)$ (see \cite{ben86}, or the appendix in
\cite{af07}). This fact allows to find $k$-periodic orbits by minimax
arguments. Indeed, let $h$ be a natural number such that the $h$-th singular
homology group of $\Lambda(M)$ - hence also of $W^{1,2}(\R/k\Z, M)$ -
does not vanish. Let $\Gamma_k(h)$ be the class of compact subsets $K$
of $W^{1,2}(\R/k\Z, M)$ such that the inclusion mapping $i:K
\hookrightarrow W^{1,2}(\R/k\Z, M)$ induces a non-zero homomorphism
between the $h$-th homology groups. Then
\begin{equation}
\label{minimax}
a_k(h) := \inf_{K \in \Gamma_k(h)} \max_{\gamma \in K} \mathbb A_k(\gamma)
\end{equation}
is finite, and there exists a critical
point $\gamma\in W^{1,2}(\R/k\Z, M)$ such that
\[
\mathbb{A}_k(\gamma) = a_k(h), \quad
m_k(\gamma) \leq h \leq m_k^*(\gamma).
\]
See for instance \cite{vit88}. In the following lemma we prove an
estimate on the growth rate of $a_k(h)$ in terms of $k$.

\begin{lem}
\label{lem1}
There exists a constant $c_1$ such that for every $h\in \N$ for which
$H_h(\Lambda(M))\neq 0$, and for every integer $k\geq 1$, there holds
\[
a_k(h) \leq \frac{2}{k^2} a_1(h) + c_1.
\]
\end{lem}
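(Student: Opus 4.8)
The plan is to compare the two functionals through the natural reparametrization identifying $1$-periodic with $k$-periodic loops. Define
\[
\psi_k : W^{1,2}(\R/\Z,M) \to W^{1,2}(\R/k\Z,M), \quad \psi_k(\gamma)(t) := \gamma(t/k).
\]
This is a diffeomorphism of Hilbert manifolds, with inverse $\delta \mapsto (s\mapsto \delta(ks))$, and in particular a homotopy equivalence. Hence it maps the minimax class $\Gamma_1(h)$ into $\Gamma_k(h)$: if $i:K\hookrightarrow W^{1,2}(\R/\Z,M)$ is nonzero on $H_h$, then $(\psi_k)_* \circ i_*$ is nonzero because $(\psi_k)_*$ is an isomorphism, so the inclusion of the compact set $\psi_k(K)$ also induces a nonzero homomorphism on $H_h$. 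First I would record this, so that for every $K\in\Gamma_1(h)$ one has $\psi_k(K)\in\Gamma_k(h)$ and therefore $a_k(h)\le \max_{\gamma\in K}\mathbb{A}_k(\psi_k(\gamma))$.

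Next I would compute the rescaled action. Writing $E(\gamma)=\frac{1}{2}\int_0^1|\dot\gamma|^2\,ds$, the change of variable $s=t/k$ gives
\[
\mathbb{A}_k(\psi_k(\gamma)) = \frac{1}{k^2}E(\gamma) + \frac{1}{k}\int_0^1 \langle A(ks,\gamma(s)),\dot\gamma(s)\rangle\,ds - \int_0^1 V(ks,\gamma(s))\,ds,
\]
while
\[
\mathbb{A}_1(\gamma) = E(\gamma) + \int_0^1 \langle A(s,\gamma(s)),\dot\gamma(s)\rangle\,ds - \int_0^1 V(s,\gamma(s))\,ds.
\]
The crux is the pointwise estimate $\mathbb{A}_k(\psi_k(\gamma)) \le \frac{2}{k^2}\mathbb{A}_1(\gamma) + c_1$. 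I would prove it by matching the kinetic terms: the coefficient $2$ on the right produces a kinetic term $\frac{2}{k^2}E$, of which only $\frac{1}{k^2}E$ is needed to cancel the left-hand side, leaving one spare copy of $\frac{1}{k^2}E$ to absorb all the magnetic contributions. Denoting the two magnetic integrals by $B_k$ and $B_1$ and bounding $\bigl|\int_0^1\langle A,\dot\gamma\rangle\bigr|\le \|A\|_\infty\sqrt{2E}$, the magnetic discrepancy $\frac{1}{k}B_k - \frac{2}{k^2}B_1$ is at most $\frac{3\|A\|_\infty}{k}\sqrt{2E}$ (using $k\ge1$), and a single application of Young's inequality gives $\frac{3\|A\|_\infty}{k}\sqrt{2E}\le \frac{E}{k^2} + \tfrac{9}{2}\|A\|_\infty^2$. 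Since $A$ and $V$ are smooth on the compact manifold, the residual potential terms are bounded by a constant, and collecting everything yields the inequality with $c_1$ depending only on $\|A\|_\infty$ and $\|V\|_\infty$. Finally I would take the maximum over $\gamma\in K$ and then the infimum over $K\in\Gamma_1(h)$, obtaining $a_k(h)\le \frac{2}{k^2}a_1(h)+c_1$.

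The step I expect to be the main obstacle is obtaining the sharp constant $2$ rather than $2(1+\delta)$. The naive route bounds the magnetic term of the rescaled action on its own against $E/k^2$ (which costs a factor $1+\delta$) and then separately estimates $E\le 2\mathbb{A}_1 + \mathrm{const}$; this produces a coefficient close to $4$ and, worse, an error term proportional to $a_1(h)$, which is unbounded in $h$ and would destroy the lemma. The point is that one must \emph{not} convert $E$ into $\mathbb{A}_1$ at all, but rather keep the magnetic term $\frac{2}{k^2}B_1$ furnished by the right-hand side, combine it with $\frac{1}{k}B_k$, and absorb the whole magnetic discrepancy into the single spare copy of $\frac{E}{k^2}$ via one use of Young's inequality. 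I would also take care to verify that $\psi_k$ genuinely preserves the homological minimax class, since $\psi_k(K)$ need not be embedded.
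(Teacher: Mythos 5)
Your proposal is correct and follows essentially the same route as the paper: the same reparametrization homeomorphism $\gamma \mapsto \gamma(\cdot/k)$, the same change of variables isolating $\frac{2}{k^2}\mathbb{A}_1(\gamma)$ plus a discrepancy, and the same absorption of the magnetic terms into the spare kinetic term $\frac{1}{k^2}E$ via the elementary inequality $\frac{3}{k}\|A\|_\infty\|\dot\gamma\|_{L^2} - \frac{1}{2k^2}\|\dot\gamma\|_{L^2}^2 \leq \frac{9}{2}\|A\|_\infty^2$, yielding the same constant $c_1 = 3\|V\|_\infty + \frac{9}{2}\|A\|_\infty^2$. Your explicit verification that $\psi_k$ carries $\Gamma_1(h)$ into $\Gamma_k(h)$ is a detail the paper leaves implicit (it simply invokes the homeomorphism), but it is not a different argument.
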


\begin{proof}
The reparametrization mapping
\[
\varphi_k:W^{1,2}(\R/\Z,M) \to W^{1,2}(\R/k\Z,M), \quad
\varphi_k(\gamma)(t) := \gamma(t/k),
\]
is a homeomorphism. Therefore,
\begin{equation}
\label{minimax2}
a_k(h) = \inf_{K \in \Gamma_k(h)} \max_{\gamma \in K} \mathbb A_k(\gamma)
=\inf_{K \in \Gamma_1(h)} \max_{\gamma \in K} \mathbb
A_k(\varphi_k(\gamma)).
\end{equation}
Let $\gamma \in W^{1,2}(\R/\Z,M)$. By a change of variable and by
simple manipulations
\begin{align*}
\mathbb{A}_k (\varphi_k(\gamma)) & = \frac{1}{k} \int_0^k
\Bigl[ \frac{1}{2k^2} \scal{\dot{\gamma}(t/k),\dot{\gamma}(t/k)}
+ \frac{1}{k}\scal{A(t,\gamma(t/k)),\dot{\gamma}(t/k)}
- V(t,\gamma(t/k)) \Bigr]\,dt \\
& = \frac{1}{k} \int_0^1 \Bigl[ \frac{1}{2k} \scal{\dot{\gamma}(t),
\dot{\gamma}(t)}
+ \scal{A(kt,\gamma(t)),\dot{\gamma}(t)} - k V(kt,\gamma(t))
\Bigr]\,dt\\
& = \frac{2}{k^2} \mathbb{A}_1(\gamma) + \int_0^1 \Bigl[ \frac{1}{k}
  \scal{A(kt,\gamma(t)),\dot\gamma(t)} - \frac{2}{k^2}
  \scal{A(t,\gamma(t)) , \dot\gamma(t)} \\ & - V(kt,\gamma(t)) +
  \frac{2}{k^2} V(t,\gamma(t)) - \frac{1}{2k^2} \scal{\dot\gamma(t),
  \dot\gamma(t)}\Bigr]\,dt.
\end{align*}
Together with the Cauchy-Schwarz inequality this yields the estimate
\begin{equation}
\label{stm}
\mathbb{A}_k (\varphi_k(\gamma)) \leq \frac{2}{k^2}
\mathbb{A}_1(\gamma) + \left(\frac{1}{k} + \frac{2}{k^2}\right)
\|A\|_{\infty} \|\dot\gamma\|_{L^2} + \left(1 + \frac{2}{k^2}\right)
\|V\|_{\infty} - \frac{1}{2k^2} \|\dot\gamma\|_{L^2}^2.
\end{equation}
Since $k\geq 1$,
\begin{eqnarray*}
\left(\frac{1}{k} + \frac{2}{k^2}\right)
\|A\|_{\infty} \|\dot\gamma\|_{L^2} - \frac{1}{2k^2} \|\dot\gamma\|_{L^2}^2
\leq \frac{3}{k} \|A\|_{\infty} \|\dot\gamma\|_{L^2} - \frac{1}{2k^2}
\|\dot\gamma\|_{L^2}^2 \\
\leq \max_{r\geq 0} \left(3 \|A\|_{\infty} r -
\frac{1}{2} r^2 \right) = \frac{9}{2} \|A\|_{\infty}^2.
\end{eqnarray*}
Therefore, (\ref{stm}) shows that for every $\gamma \in
W^{1,2}(\R/\Z,M)$ there holds
\[
\mathbb{A}_k (\varphi_k(\gamma)) \leq \frac{2}{k^2}
\mathbb{A}_1(\gamma) + c_1,
\]
where
\[
c_1 := 3 \|V\|_{\infty} + \frac{9}{2} \|A\|_{\infty}^2.
\]
The conclusion follows by (\ref{minimax2}).
\end{proof} \qed

In the next lemma we prove an estimate on the growth rate of $a_1(h)$
in terms of $h$.

\begin{lem}
\label{lem2}
If $M$ is compact and simply connected, then there exist
positive constants $c_2$ and $c_3$ such that
\[
a_1(h) \leq c_2 h^2 + c_3,
\]
for every $h\in \N$ for which $H_h(\Lambda(M))\neq 0$.
\end{lem}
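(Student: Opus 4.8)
The plan is to reduce the minimax for $\mathbb{A}_1$ to one for the Dirichlet energy and then to feed in Gromov's length estimate. Set $E(\gamma):=\frac12\int_0^1|\dot\gamma|^2\,dt$. Since $M$ is compact and $A,V$ are smooth and $1$-periodic, $\|A\|_{\infty}$ and $\|V\|_{\infty}$ are finite, and the Cauchy--Schwarz and Young inequalities give, for every $\gamma\in W^{1,2}(\R/\Z,M)$,
\[
\mathbb{A}_1(\gamma)\le \tfrac12\|\dot\gamma\|_{L^2}^2+\|A\|_{\infty}\|\dot\gamma\|_{L^2}+\|V\|_{\infty}\le \|\dot\gamma\|_{L^2}^2+C_0=2E(\gamma)+C_0,
\]
where $C_0:=\tfrac12\|A\|_{\infty}^2+\|V\|_{\infty}$. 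Hence, denoting by
\[
c(h):=\inf_{K\in\Gamma_1(h)}\ \max_{\gamma\in K}E(\gamma)
\]
the corresponding minimax value for $E$, we have $a_1(h)\le 2\,c(h)+C_0$, and it suffices to prove $c(h)\le C_1 h^2$ for some constant $C_1$ depending only on $M$ and its metric.

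The key input is Gromov's length estimate, which I would state in the following energy form: since $M$ is compact and simply connected, there is a constant $C_1=C_1(M,\langle\cdot,\cdot\rangle)$ such that for every $h$ the inclusion $\{\,\gamma: E(\gamma)\le C_1 h^2\,\}\hookrightarrow\Lambda(M)$ induces a surjection on $H_h$; equivalently, every class in $H_h(\Lambda(M))$ is representable by a cycle made of loops of length $O(h)$. Granting this, fix $h$ with $H_h(\Lambda(M))\neq 0$ and choose $0\neq\alpha\in H_h(\Lambda(M))$. By the estimate, $\alpha$ is the image of some class in $H_h(\{E\le C_1 h^2\})$; representing that class by a singular cycle $z$, its support $K:=|z|$ is a compact subset of $\{E\le C_1 h^2\}$, and the inclusion $K\hookrightarrow\Lambda(M)$ carries $[z]\in H_h(K)$ to $\alpha\neq 0$, so $K\in\Gamma_1(h)$ with $\max_{\gamma\in K}E(\gamma)\le C_1 h^2$. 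Since $c(h)$ is an infimum over $\Gamma_1(h)$, this single cheap cycle yields $c(h)\le C_1 h^2$, whence $a_1(h)\le 2C_1 h^2+C_0$; the lemma follows with $c_2:=2C_1$ and $c_3:=C_0$.

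The main obstacle is the length estimate itself, which is where compactness and simple connectedness of $M$ enter and which is purely topological-geometric (it does not involve $A$ or $V$). Conceptually, because $\pi_1(M)=0$ and $M$ is compact, homotopies can be carried out with controlled length growth, and the classical approximation of $\Lambda(M)$ by finite-dimensional spaces of geodesic polygons suggests that a degree-$h$ class can already be supported on polygons with $O(h)$ edges of length bounded by the injectivity radius, hence of total length $O(h)$ and energy $O(h^2)$. Turning ``$O(h)$ edges suffice'' into the required homological surjectivity is exactly the nontrivial content, and I would quote it from Gromov's work (the source F.\ Schlenk pointed out) rather than reprove it. One technical point worth flagging: if one prefers to phrase the estimate through the length rather than the energy, one must reparametrize the loops of the cycle to nearly constant speed continuously in the family, which is delicate where a loop has vanishing speed; stating the estimate directly for the energy sublevel sets, as above, sidesteps this.
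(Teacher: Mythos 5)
Your reduction of the action to the Dirichlet energy (via Young's inequality, $\mathbb{A}_1(\gamma)\le 2E(\gamma)+C_0$) and the minimax logic are fine, and your overall scheme parallels the paper's; but there is a genuine gap at the one step that carries all the content. You state Gromov's result in an ``energy form'' --- the inclusion $\{E\le C_1h^2\}\hookrightarrow\Lambda(M)$ is surjective on $H_h$ --- and declare it \emph{equivalent} to representability of classes by cycles of loops of length $O(h)$. These statements are not equivalent: energy controls length by Cauchy--Schwarz, $L(\gamma)^2\le 2E(\gamma)$, but a length bound gives no bound on energy, because energy depends on the parametrization while length does not. What Gromov actually proves, and what you can quote from \cite{gro78} or \cite[Theorem 7.3]{gro99}, is only the length version (Theorem \ref{length} in the paper). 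The passage from a length-bounded cycle to an energy-bounded (or speed-bounded) one is precisely the nontrivial point: one must reparametrize all the loops of the cycle simultaneously and continuously in the family, which is delicate exactly where a loop has somewhere-vanishing speed, so that arclength parametrization is not continuous. The paper is explicit that this is a real trap, noting that the energy estimate was \emph{erroneously} claimed to follow from the length estimate in \cite{pat99}.

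You do flag this difficulty yourself, but claiming that ``stating the estimate directly for the energy sublevel sets sidesteps this'' is circular: it relocates the unproved step into the hypothesis, and there is no energy-form statement in Gromov's work to cite. The paper closes this gap in Appendix A (Corollary \ref{speed}, following \cite[Lemma 2.12]{fs06}): one reparametrizes by the \emph{slowly varying} speed $\sqrt{1+|\dot\gamma(t)|^2}$, whose associated change of variable $\sigma_\gamma$ has derivative bounded below by $\sqrt{1+|\dot\gamma|^2}/(1+L(\gamma))>0$, so the homotopy $F(\lambda,\gamma)(t)=\gamma(\lambda\tau_\gamma(t)+(1-\lambda)t)$ is continuous and carries Gromov's cycle to a homologous one with $\|\dot\gamma_1\|_\infty\le 1+L(\gamma)\le(1+C_0)h$; the quadratic action bound then follows as in your final computation. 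Your heuristic about broken geodesics with $O(h)$ edges could perhaps be developed into a direct proof of the energy form, but as written it is a sketch, not an argument, so the lemma remains unproved in your proposal.
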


\begin{proof}
Let $L(\gamma)$ be the length of a loop $\gamma$ with respect to the metric $\langle \cdot, \cdot \rangle$,
\[
L(\gamma) = \int_0^1 |\dot\gamma(t)| dt.
\]
Gromov has proved that the fact that the compact Riemannian manifold $M$ is simply connected has the following consequence: there exists a number $C_0$ such that every singular homology class $\alpha\in H_h(W^{1,2}(\R/\Z,M))$ is represented by a singular cycle whose support is contained in the length sublevel
\[
\set{\gamma \in W^{1,2}(\R/\Z,M) }{L(\gamma) \leq C_0 h},
\]
for every $h\in \N$ (see \cite{gro78} and \cite{gro99}, Theorem 7.3). By a reparametrization argument, we can transform this cycle into a cycle in the same homology class $\alpha$ consisting of loops $\gamma$ which satisfy the estimate
\[
\|\dot{\gamma}\|_{\infty} \leq C h,
\]   
for a suitable constant $C$, not depending on $h$. See Appendix A for details.
Therefore, for every $h\in \N$ for which the $h$-th homology group of the free loop space of $M$ does not vanish there exists $K$ in $\Gamma_1(h)$ such that
\[
\max_{\gamma\in K} \|\dot{\gamma}\|_{\infty} \leq C h.
\]
Then, if $\gamma$ belongs to $K$,
\begin{multline*}
\mathbb{A}_1(\gamma) \leq \int_0^1 \left( \frac{1}{2} |\dot\gamma|^2 + \|A\|_{\infty} |\dot\gamma| + \|V\|_{\infty} \right) \, dt \\ \leq \frac{1}{2} C^2 h^2 + \|A\|_{\infty} C h + \|V\|_{\infty} \leq C^2 h^2 + \frac{1}{2} \|A\|_{\infty}^2 + \|V\|_{\infty}.
\end{multline*}
We conclude that the desired estimate holds with $c_1=C^2$ and $c_2 = \|A\|_{\infty}^2/2 + \|V\|_{\infty}$. 
\end{proof} \qed

\begin{proof}[of Theorem \ref{main}]
By lifting the system to the universal covering, we may assume that
$M$ is simply connected. Up to adding a positive constant, we may
assume that the Lagrangian $L$ is non-negative.

Assume that (i) does not hold: there are
no contractible closed orbits with integer period and asymptotic
Maslov index $r$. Arguing as in the proof of Theorem
\ref{dense}, we can find two diverging sequences
of natural numbers $(h_j)$ and $(k_j)$ such that the $h_j$-th homology group of
$\Lambda(M)$ does not vanish, $k_j\geq 1$, and $(h_j/k_j)$ converges
to $r$. By the minimax (\ref{minimax}), we can find a
$k_j$-periodic solution $\gamma_j$ with
\[
\mathbb{A}_{k_j} (\gamma_j) = a_{k_j}(h_j), \quad m_{k_j}(\gamma_j)
\leq h_j \leq m_{k_j}^*(\gamma_j).
\]
By Lemmas \ref{lem1} and
\ref{lem2},
\[
\mathbb{A}_{k_j} (\gamma_j) = a_{k_j}(h_j) \leq \frac{2}{k_j^2}
a_1(h_j) + c_1 \leq 2 c_2 \frac{h_j^2}{k_j^2} + 2c_3 \frac{1}{k_j^2} +
c_1,
\]
so the average action of $\gamma_j$ has a uniform bound of the form,
\begin{equation}
\label{bound}
\mathbb{A}_{k_j}(\gamma_j) \leq C (r^2 +1), \quad \forall j\in \N.
\end{equation}

Let $k_j'\in \N$ be the minimal period of the periodic solution
$\gamma_j$. If by contradiction $k_j'$ does not diverge, up to a subsequence
we may assume that $k_j\equiv k$ is constant. Then $(\gamma_j)$ is a
sequence of $k$-periodic solutions with bounded action. It easily
follows that $(\gamma_j)$ is bounded in $C^2(\R/\Z,M)$ (see for
instance \cite[Lemma 1.1]{af07}), so a subsequence of $(\gamma_j)$
converges in $C^1$ to a contractible $k$-periodic solution $\gamma$ which has
asymptotic Maslov index $r$ (for instance because the associated
invariant probability measures on $\T \times TM$ converge
narrowly). This contradiction shows that the minimal
periods diverge.

Let $\eta_j$ be the invariant probability measure on $\T \times TM$
associated to $\gamma_j$, that is
\[
\eta_j(A) = \frac{1}{k_j} |\set{t\in
  [0,k_j]}{(t,\gamma_j(t),\dot\gamma_j(t))\in A}|,
\]
for every Borel subset $A$ of $\T \times TM$. Fix some number $M>0$.
By the Tchebichev inequality - recall that
$L\geq 0$ - and by (\ref{bound}),
\begin{multline}
\label{smb} \eta_j(\set{(t,q,v)\in \T \times TM}{L(t,q,v)\geq M})
\leq \frac{1}{M} \int_{\T \times TM} L(t,q,v)\, d\eta_j(t,q,v) \\
= \frac{1}{M} \frac{1}{k_j} \int_0^{k_j}
L(t,\gamma_j(t),\dot\gamma_j(t))\, dt = \frac{1}{M}
\mathbb{A}_{k_j}(\gamma_j) \leq \frac{c}{M}. \end{multline} Since
the function $L$ is coercive on $\T \times TM$, we conclude that
the sequence $(\eta_j)$ is tight, so by the Prokhorov theorem up
to a subsequence it narrowly converges to some invariant
probability measure $\eta$ on $\T \times TM$. Actually, since $L$
grows quadratically in $v$, the bound (\ref{smb}) shows that the
sequence $(\eta_j)$ has uniformly integrable second moments, so
$\eta$ has finite second moment (see for instance \cite[section
5.1.1]{ags05}). In particular, by (\ref{finfirst}) for every
$t\geq 0$ the function $\mu_t$ is uniformly integrable with
respect to the sequence $(\eta_j)$, so by Theorem \ref{cont} the
invariant probability measure $\eta$ has asymptotic Maslov index
$r$. This concludes the proof of (ii). Since the action is
lower semi-continuous with respect to the narrow topology of
measures, the estimate (\ref{actbou}) follows from (\ref{bound})
(in case (ii), case (i) is similar).
\end{proof} \qed

\begin{rem}
\label{final}
As mentioned before, if the Lagrangian $L$ does not depend on time
then either there is a contractible closed orbit
with asymptotic Maslov index $r$, or there is a sequence of
contractible closed orbits with diverging minimal periods such that
the corresponding probability measures on $TM$ narrowly
converge to an invariant probability measure which is supported on an
energy level and has asymptotic Maslov index $r$.

Indeed, the above proof produces a sequence of contractible closed
orbits $(\gamma_j)$ with asymptotic Maslov index converging to
$r$ and bounded average action. The energy $H$ - read on $TM$ by
the Legendre transform - is an integral of the Euler-Lagrange flow, so
the bound on the average action implies that each curve
$(\gamma_j,\dot\gamma_j)$ is contained in an energy level
$\{H=H_j\}$, with $(H_j)$ a bounded sequence of real numbers. If the
minimal periods of $\gamma_j$ are bounded, by an easy limiting
argument we obtain a contractible closed orbit with asymptotic Maslov index
$r$. Another consequence is that the corresponding probability measures
on $TM$ narrowly converge - up to a subsequence - to a probability
measure which is also supported on an energy level.
\end{rem}

\section{Minimizing measures and Aubry-Mather theory}

In this section we develop an analogue of the ergodicity analysis in
Aubry-Mather theory (see for instance \cite[Chapter III]{man91} or
\cite{sib04} for an introduction to the subject) replacing the
rotation vector of a closed measure with the asymptotic Maslov
index of the measure. 

For sake of simplicity, we deal with an electromagnetic Lagrangian
of the form (\ref{lag}) on a manifold $M$ which is assumed to be
compact and to have finite fundamental group. See Remark \ref{finrem} below
for possible generalizations.

By Theorem \ref{main}, for any $r\in [0,+\infty)$ there is an invariant probability measure $\eta$ on $\T \times TM$ with asymptotic Maslov index $\hat{\mu}(\eta)=r$ and finite action
\[
\mathbb{A}(\eta) := \int_{\T \times TM} L(t,q,v)\, d\eta(t,q,v).
\]
Therefore, we can define a real function $\beta$ on $[0,+\infty)$ as
\[
\beta(r):=\inf_{\hat{\mu} (\eta)= r} \mathbb{A}(\eta),
\]
where the infimum is taken over all the invariant measures on $\T \times TM$ for which $\mu_t$ is integrable for every $t\geq 0$, and $\hat{\mu}(\eta)=r$.

\begin{prop}
\label{pbta}
For every $r\geq 0$, the infimum in the definition of $\beta(r)$ is attained. Moreover, the function $\beta$ has the following properties:
\begin{enumerate}
\item[(i)] $\beta$ is convex;
\item[(ii)] $\beta$ has quadratic growth at infinity, meaning that
\[
a_1 r^2 - A_1 \leq \beta(r) \leq a_2 r^2 + A_2,
\]
for some positive constants $a_1,a_2,A_1,A_2$.
\end{enumerate}
\end{prop}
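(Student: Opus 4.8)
The plan is to establish the three assertions in the following order: first the quadratic growth bounds (ii), then the attainment of the infimum, and finally convexity (i). The reason for this order is that the upper bound in (ii) is essentially already contained in Theorem \ref{main}, while the lower bound together with the coercivity of $L$ will supply the compactness needed to prove that the infimum is attained, and convexity will follow from a measure-averaging argument that is cleanest to present once attainment is in hand.

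For the upper bound in (ii), I would simply invoke the action estimate (\ref{actbou}) from Theorem \ref{main}: for every $r\geq 0$ there is an invariant measure $\eta$ with $\hat\mu(\eta)=r$ and $\mathbb{A}(\eta)\leq C(1+r^2)$, so $\beta(r)\leq C r^2 + C$, giving the upper bound with $a_2=A_2=C$. For the lower bound $\beta(r)\geq a_1 r^2 - A_1$ I would reverse the mechanism used in Section \ref{sect:convex}: the Morse-index growth estimate (\ref{finfirst}), $\mu_t(s,q_0,p_0)\leq c_4(t)(1+|p_0|)$, together with the definition $\hat\mu(\eta)=\lim_{t\to\infty}\frac1t\int \mu_t\,d\eta$, should bound the asymptotic Maslov index linearly in terms of the average momentum, hence in terms of $\sqrt{\mathbb{A}(\eta)}$, since the kinetic term $\tfrac12\langle v,v\rangle$ in (\ref{lag}) dominates the action. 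Concretely, after transferring to $TM$ via the Legendre transform (\ref{legendre}) one expects an inequality of the form $r=\hat\mu(\eta)\leq a+b\int|v|\,d\eta\leq a+b'\sqrt{\int|v|^2\,d\eta}$, and controlling $\int|v|^2\,d\eta$ by $\mathbb{A}(\eta)$ yields $r^2\leq \mathrm{const}\cdot(1+\mathbb{A}(\eta))$, i.e. $\mathbb{A}(\eta)\geq a_1 r^2 - A_1$ uniformly over all admissible $\eta$, whence the same bound for the infimum $\beta(r)$.

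For attainment, take a minimizing sequence $(\eta_m)$ of invariant measures with $\hat\mu(\eta_m)=r$ and $\mathbb{A}(\eta_m)\to\beta(r)$. The action bound makes $\int L\,d\eta_m$ uniformly bounded, so by coercivity of $L$ on $\T\times TM$ the sequence is tight, and since $L$ grows quadratically in $v$ the second moments are uniformly integrable; by Prokhorov a subsequence narrowly converges to an invariant measure $\eta$ with finite second moment, exactly as in the proof of Theorem \ref{main}. Lower semicontinuity of $\mathbb{A}$ under narrow convergence gives $\mathbb{A}(\eta)\leq\beta(r)$, and the uniform integrability of $\mu_t$ via (\ref{finfirst}) lets me apply Theorem \ref{cont} to conclude $\hat\mu(\eta)=\lim_m\hat\mu(\eta_m)=r$; hence $\eta$ is admissible and $\mathbb{A}(\eta)=\beta(r)$.

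For convexity, given $r_0,r_1\geq 0$ and $\theta\in[0,1]$, let $\eta_0,\eta_1$ be minimizers with $\hat\mu(\eta_i)=r_i$ and form the convex combination $\eta_\theta:=(1-\theta)\eta_0+\theta\eta_1$, which is again an invariant probability measure. Both $\mathbb{A}$ and the map $\eta\mapsto\int\mu_t\,d\eta$ are linear (affine) in $\eta$, so $\mathbb{A}(\eta_\theta)=(1-\theta)\mathbb{A}(\eta_0)+\theta\mathbb{A}(\eta_1)$ and, passing to the limit in $t$, $\hat\mu(\eta_\theta)=(1-\theta)r_0+\theta r_1=:r_\theta$; therefore $\beta(r_\theta)\leq\mathbb{A}(\eta_\theta)=(1-\theta)\beta(r_0)+\theta\beta(r_1)$, which is convexity. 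I expect the main obstacle to be the lower bound in (ii): one must verify that the limit $\hat\mu(\eta)=\lim_t\frac1t\int\mu_t\,d\eta$ genuinely inherits the pointwise linear bound (\ref{finfirst}) uniformly and that the resulting control of $\int|v|\,d\eta$ can be upgraded to control of $\int|v|^2\,d\eta$ by $\mathbb{A}(\eta)$ via Cauchy--Schwarz without losing the quadratic nature of the estimate.
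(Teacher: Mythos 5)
Your proposal reproduces the paper's proof in three of its four parts --- the upper bound in (ii) via the action estimate (\ref{actbou}) of Theorem \ref{main}; attainment via tightness, Prokhorov, narrow lower semicontinuity of $\mathbb{A}$, uniform integrability of $\mu_t$ from (\ref{finfirst}), and Theorem \ref{cont}; and convexity via linearity of $\eta\mapsto\mathbb{A}(\eta)$ and $\eta\mapsto\hat{\mu}(\eta)$ --- and those parts are correct.

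The genuine gap is exactly where you suspected it: the lower quadratic bound. The mechanism you propose --- combining the pointwise bound (\ref{finfirst}) with the definition $\hat{\mu}(\eta)=\lim_{t\to+\infty}\frac{1}{t}\int\mu_t\,d\eta$ --- fails as written, because the constant $c_4(t)$ in (\ref{finfirst}) is not sublinear in $t$: tracing its derivation in Section \ref{sect:convex}, it comes from the Morse index bound
\[
n \left\lfloor \frac{|t|\sqrt{2 c_2 (1+|\dot\gamma(s)|^2)}\, e^{c_1|t|/2}}{\pi} \right\rfloor,
\]
so $c_4(t)$ grows exponentially in $t$ and $c_4(t)/t\to+\infty$; hence the inequality $\frac{1}{t}\int\mu_t\,d\eta \le \frac{c_4(t)}{t}\bigl(1+\int|p|\,d\eta\bigr)$ degenerates in the limit and yields no bound on $\hat{\mu}(\eta)$. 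The missing ingredient, which is what the paper uses, is the uniform estimate (\ref{unif}): writing $M_t(\eta):=\int\mu_t\,d\eta$, subadditivity (\ref{subadd}) gives $|\hat{\mu}(\eta)-M_t(\eta)/t|\le 2n/t$ for every invariant probability measure, and this converts the asymptotic quantity into a fixed-time one. Applying it at $t=1$ gives $\hat{\mu}(\eta)\le 2n + \int\mu_1\,d\eta$, so (\ref{finfirst}) is needed only at $t=1$, where $c_4(1)$ is a harmless constant; Cauchy--Schwarz then gives $\hat{\mu}(\eta)\le 2n + c_4(1) + c_4(1)\bigl(\int|p|^2\,d\eta\bigr)^{1/2}$, and the lower quadratic bound on $L$ yields $\hat{\mu}(\eta)\le b_1+b_2\,\mathbb{A}(\eta)^{1/2}$, which is precisely your desired intermediate inequality and gives $\beta(r)\ge a_1r^2-A_1$. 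Your second listed worry (passing from $\int|v|\,d\eta$ to $\bigl(\int|v|^2\,d\eta\bigr)^{1/2}$ and controlling the latter by $\mathbb{A}(\eta)$) is not an issue: the measures are probability measures, and $L\ge \frac{1}{4}|v|^2-\mathrm{const}$ for a Lagrangian of the form (\ref{lag}).
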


\begin{proof}
In order to prove that the infimum defining $\beta(r)$ is
attained, it suffices to show that for every $r\geq 0$ and $c\in
\R$ the space of invariant measures with asymptotic Maslov index
$r$ and action not exceeding $c$ is compact in the narrow
topology. Let $(\eta_j)$ be a sequence of invariant measures with
$\hat{\mu}(\eta_j)=r$ and $\mathbb{A}(\eta_j)\leq c$. Since $L$
grows quadratically in $v$ at infinity, the upper bound on
$\mathbb{A}(\eta_j)$ implies that the sequence $(\eta_j)$ has
uniformly integrable second moments. In particular $(\eta_j)$ is
tight, so by Prokhorov theorem up to a subsequence it narrowly
converges to some probability measure $\eta$. The measure $\eta$
is invariant, and since $\mathbb{A}$ is narrowly lower
semi-continuous we get $\mathbb{A}(\eta)\leq c$. By the estimate
(\ref{finfirst}), for every $t\geq 0$ the function $\mu_t$ is
uniformly integrable with respect to the sequence of measures
$(\eta_j)$, so Theorem \ref{cont} implies that the invariant
probability measure $\eta$ has asymptotic Maslov index $r$. This
concludes the proof of the narrow compactness.

(i) Since the asymptotic Maslov index is linear in $\eta$, that is
\[
\hat{\mu}(s \eta_1 + (1-s) \eta_2)=s \hat{\mu}(\eta_1) + (1-s) \hat{\mu} (\eta_2)
\qquad \text{for }0 \leq s \leq 1,
\]
the convexity of $\beta$ follows easily from the definition.

(ii) The upper quadratic bound follows from the existence of an
invariant measure satisfying the estimate (\ref{actbou}) in
Theorem \ref{main}. As for the lower quadratic bound, notice that
by (\ref{unif}), by (\ref{finfirst}), and by the Cauchy-Schwarz
inequality, any invariant probability measure $\eta$ satisfies
\begin{multline*}
\hat{\mu} (\eta) \leq 2n + M_1 (\eta) = 2n + \int \mu_1(s,q,p)\, d\eta(s,q,p)  \\
\leq 2n+ c_4(1) \int (1+|p|) \, d\eta(s,q,p) \leq 2n + c_4(1) + c_4(1) \left( \int |p|^2\, d\eta(s,q,p) \right)^{1/2}.
\end{multline*}
Since $L$ has a lower quadratic bound, the above inequality implies an estimate of the form
\[
\hat{\mu}(\eta) \leq b_1 + b_2 \left( \int L(s,q,p) \, d\eta(s,q,p) \right)^{1/2} =
b_1 + b_2 \, \mathbb{A}(\eta)^{1/2},
\]
for suitable positive numbers $b_1,b_2$. The lower quadratic bound for $\beta$ follows.
\end{proof} \qed

Let us denote by $\mathcal M_r \subset
\set{\eta}{\hat{\mu}(\eta)=r}$ the set of invariant measures with
minimal action among those with asymptotic Maslov index $r$.
Thanks to the convexity and quadratic growth of $\beta$ we can now
prove that, although in general minimizing measures are not
ergodic, there exists a diverging sequence of values $r\in
[0,+\infty)$ for which $\mathcal M_r$ contains an ergodic measure.

We recall the concept of ergodic decomposition of an invariant
measure. Let $\eta$ be an invariant probability measure on $\T \times TM$,
and define $\mathscr{E}(\eta)$ to be the space of ergodic 
invariant probability measures whose support is contained in the
support of $\eta$. We endow this space with the narrow topology. Then 
there exists a unique probability measure $\lambda$ on the Borel 
$\sigma$-algebra of $\mathscr{E}(\eta)$ such that
\[
\eta(A) = \int_{\mathscr{E}(\eta)} \nu(A)\, d\lambda(\nu),
\]
for every Borel set $A\subset \T \times TM$. The measure $\lambda$ is 
called the \textit{ergodic decomposition} of $\eta$ and the ergodic 
measures $\nu \in \supp(\lambda)$ are called the \textit{ergodic
components} of $\eta$. Since $\eta$ need not have compact support,
this decomposition result does not follow at once from
Choquet theorem, but it requires a more sophisticated proof. It is
proved, in the more general setting of a locally compact group (in our
case $\R$) acting in a measurable way on a complete separable metric 
space (in our case the support of $\eta$), by Varadarajan in 
\cite[Theorem 4.4]{var63}.   

We also recall that a point $r \in [0,+\infty)$ is said to be an extremal point of the convex function $\beta$ if $\beta(r) < s\beta(r_1) + (1-s)\beta(r_2)$ for all $r_1,r_2 \in [0,+\infty)$ and $0 < s < 1$ satisfying $sr_1 + (1-s)r_2=r$.
We can now prove the following:

\begin{prop}
If $r$ is an extremal point of $\beta$, then $\mathcal M_r$ contains an ergodic measure.
\end{prop}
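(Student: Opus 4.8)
The plan is to take an arbitrary action-minimizing measure $\eta\in\mathcal{M}_r$ (such a measure exists because the infimum defining $\beta(r)$ is attained, by Proposition \ref{pbta}) and to extract an ergodic minimizer from its ergodic decomposition. Let $\lambda$ be the ergodic decomposition of $\eta$ on $\mathscr{E}(\eta)$, so that $\eta=\int_{\mathscr{E}(\eta)}\nu\,d\lambda(\nu)$. The two functionals in play are affine with respect to this decomposition: since $L\geq 0$ and the pointwise index $\hat{\mu}(s,x_0)$ is a $\phi$-invariant function in $L^1(\eta)$ by Theorem \ref{erg}, integrating both through $\lambda$ gives
\[
\mathbb{A}(\eta)=\int_{\mathscr{E}(\eta)}\mathbb{A}(\nu)\,d\lambda(\nu),\qquad r=\hat{\mu}(\eta)=\int_{\mathscr{E}(\eta)}\hat{\mu}(\nu)\,d\lambda(\nu).
\]
Because $\mathbb{A}(\eta)=\beta(r)<\infty$, for $\lambda$-almost every $\nu$ the action $\mathbb{A}(\nu)$ is finite, so $\nu$ has finite second moment and, by (\ref{finfirst}), belongs to the admissible class over which $\beta$ is defined; in particular $\mathbb{A}(\nu)\geq\beta(\hat{\mu}(\nu))$ for $\lambda$-a.e.\ $\nu$.

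Next I would run a Jensen squeeze. Using the minimality bound $\mathbb{A}(\nu)\geq\beta(\hat{\mu}(\nu))$ followed by the convexity of $\beta$ (Proposition \ref{pbta}(i)) applied to the barycenter formula above, one gets
\[
\beta(r)=\int_{\mathscr{E}(\eta)}\mathbb{A}(\nu)\,d\lambda(\nu)\geq\int_{\mathscr{E}(\eta)}\beta(\hat{\mu}(\nu))\,d\lambda(\nu)\geq\beta\Bigl(\int_{\mathscr{E}(\eta)}\hat{\mu}(\nu)\,d\lambda(\nu)\Bigr)=\beta(r),
\]
so equality holds throughout. It is convenient to record this by pushing $\lambda$ forward under $\nu\mapsto\hat{\mu}(\nu)$ to a probability measure $\tilde{\lambda}$ on $[0,+\infty)$ with barycenter $r$ and with $\int\beta\,d\tilde{\lambda}=\beta(r)$.

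The crucial step is to invoke the extremality of $r$ to force $\tilde{\lambda}$ to be the Dirac mass at $r$. Suppose not; since $\tilde{\lambda}$ has barycenter $r$, it must carry positive mass both on $\{t<r\}$ and on $\{t>r\}$. Splitting $[0,+\infty)$ into $\{t<r\}$ and $\{t\geq r\}$ and taking the conditional barycenters $r_1<r<r_2$ with weights $p,1-p\in(0,1)$ satisfying $pr_1+(1-p)r_2=r$, convexity of $\beta$ on each piece yields
\[
\beta(r)=\int_{\mathscr{E}(\eta)}\beta(\hat{\mu}(\nu))\,d\lambda(\nu)\geq p\,\beta(r_1)+(1-p)\,\beta(r_2)>\beta(r),
\]
the last strict inequality being exactly the extremality of $r$ (applied to the distinct points $r_1\neq r_2$). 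This contradiction shows $\hat{\mu}(\nu)=r$ for $\lambda$-a.e.\ $\nu$. With this in hand, the first inequality of the squeeze reads $\int_{\mathscr{E}(\eta)}(\mathbb{A}(\nu)-\beta(r))\,d\lambda(\nu)=0$ with nonnegative integrand, so $\mathbb{A}(\nu)=\beta(r)$ for $\lambda$-a.e.\ $\nu$. Thus $\lambda$-almost every ergodic component $\nu$ is ergodic, has asymptotic Maslov index $r$, and attains $\mathbb{A}(\nu)=\beta(r)$; any such $\nu$ lies in $\mathcal{M}_r$, which completes the argument.

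The hard part is not the convexity bookkeeping, which is elementary, but justifying that $\hat{\mu}$ is genuinely affine under the ergodic decomposition in this non-compactly-supported setting, i.e.\ that $\int\hat{\mu}(s,x_0)\,d\eta=\int_{\mathscr{E}(\eta)}\bigl(\int\hat{\mu}(s,x_0)\,d\nu\bigr)d\lambda(\nu)$ together with the required measurability of $\nu\mapsto\hat{\mu}(\nu)$ and integrability of $\hat\mu$ against $\lambda$-a.e.\ component. This rests on $\hat{\mu}(s,x_0)$ being an honest $\phi$-invariant $L^1(\eta)$ function (Theorem \ref{erg}) combined with Varadarajan's decomposition theorem, which is precisely the point where one must be careful because $\eta$ need not have compact support.
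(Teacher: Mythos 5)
Your proof is correct and takes essentially the same route as the paper's: ergodic decomposition of a minimizer via Varadarajan's theorem, affinity of $\mathbb{A}$ and $\hat{\mu}$ under the decomposition, a Jensen squeeze using the convexity of $\beta$, and the extremality of $r$ to force $\hat{\mu}(\nu)=r$ and hence $\mathbb{A}(\nu)=\beta(r)$ for $\lambda$-a.e.\ ergodic component. The only difference is that you make explicit two points the paper leaves implicit, namely the admissibility of almost every component (finite action implies finite second moment, hence $\mu_t\in L^1(\nu)$ by (\ref{finfirst})) and the pushforward/splitting argument by which extremality yields the Dirac mass at $r$.
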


Combining this proposition with the fact that $\beta$ has
quadratic growth at infinity, we conclude that there exists a
diverging sequence $(r_j)$ such that $\mathcal M_{r_j}$ contains
an ergodic measure.

\begin{proof}
Let us fix $\eta \in \mathcal M_r$.
By the ergodic decomposition theorem stated above, we can write
\begin{equation}
\label{eqbetadecomp}
\beta(r)=\int_{\T \times TM} L \,d\eta=\int_{\mathscr{E}(\eta)} 
\left( \int_{\T \times TM} L \,d\nu\right) \,d\lambda(\nu).
\end{equation}
Moreover, by Theorem \ref{erg},
\[
\hat{\mu}(\eta)=\int_{\T \times TM} \hat{\mu}(s,x) \,d\eta=
\int_{\mathscr{E}(\eta)} \left( \int_{\T \times TM} \hat{\mu}(s,x)
\,d\nu\right) \,d\lambda(\nu) =\int_{\mathscr{E}(\eta)} 
\hat{\mu}(\nu) \,d\lambda(\nu).
\]
Since $\beta$ is convex, by Jensen inequality we get
\[
\beta(r)=\beta(\hat{\mu}(\eta)) = \beta \left(
\int_{\mathscr{E}(\eta)} \hat{\mu}(\nu)
\, d\lambda(\nu) \right) \leq \int_{\mathscr{E}(\eta)} 
\beta(\hat{\mu}(\nu)) \,d\lambda(\nu),
\]
which together with (\ref{eqbetadecomp}) gives
\[
\int_{\mathscr{E}(\eta)} \left( \int_{\T \times TM} 
L \,d\nu\right) \,d\lambda(\nu) \leq \int_{\mathscr{E}(\eta)} 
\beta(\hat{\mu}(\nu)) \,d\lambda(\nu).
\]
Since, by the definition of $\beta$, $\int_{\T \times TM} L \,d\nu
\geq \beta(\hat{\mu}(\nu))$, we conclude that
\[
\int_{\T \times TM} L \,d\nu=\beta(\hat{\mu}(\nu)) \quad \mbox{and} \quad \beta(\hat{\mu}(\eta))
= \int_{\mathscr{E}(\eta)} \beta(\hat{\mu}(\nu)) \,d\lambda(\nu)
\qquad \text{for }\lambda-\text{a.e. }\nu,
\]
The latter identity, combined with the fact that
$r=\hat{\mu}(\eta)$ is an extremal point of $\beta$, implies that $\hat{\mu}(\nu)=r$ for $\lambda$-a.e. $\nu$. We have shown that $\lambda$-a.e.\ ergodic component of an element of $\mathcal{M}_r$ belongs to $\mathcal{M}_r$.
\end{proof} \qed

\begin{rem}
We observe that, without the assumption that $r$ is an extremal
point of $\beta$, from the equality $\beta(\hat{\mu}(\eta)) =
\int_{\mathscr{E}(\eta)} 
\beta(\hat{\mu}(\nu)) \,d\lambda(\nu)$ one deduces that, for
$\lambda$-a.e. $\nu$, $\beta(\hat{\mu}(\nu))$ belongs to the so
called supported domain of $r$, that is the interval given by
\[
\bigcap_{p \in \partial \beta(r)}
\{u \in [0,+\infty) \mid \beta(u)= \beta(r)+p(u-r)\},
\]
where $\partial \beta(r)$ denotes the subdifferential of $\beta$ at $r$, that is
\[
\partial \beta(r):=\{p \in \R \mid \beta(u)= \beta(r)+p(u-r) \ \forall u \in [0,+\infty)\}.
\]
\end{rem}

\begin{rem}
\label{finrem}
The results of this section can be extended to an autonomous Tonelli 
Lagrangian (the assumption of non-dependence on $t$ allows to recover 
compactness in a simple way), proving in this case a superlinear
growth at infinity on the function $\beta$ (see Proposition
\ref{pbta} (ii)). The configuration space $M$ could be any compact 
manifold, provided that we replace the half-line $[0,+\infty)$ by the 
interval $I(H)$, the range of the asymptotic Maslov index.
\end{rem}

\renewcommand{\thesection}{\Alph{section}}
\setcounter{section}{0}
\renewcommand{\theequation}{\Alph{section}\arabic{equation}}
\setcounter{equation}{0}

\section{Appendix: Singular cycles with bounded speed in the loop space}

Let $(M, \langle\cdot,\cdot\rangle)$ be a Riemannian manifold, and let
\[
L(\gamma) = \int_0^1 |\dot\gamma(t)|\, dt
\]
denote the length of an absolutely continuous curve $\gamma: [0,1]
\rightarrow M$. 
Gromov \cite{gro78} has proved  the following result:

\begin{thm}
\label{length}
Assume that the manifold $M$ is compact and simply connected. Then
there is a number $C_0$ such that for every $h\in \N$ every singular homology class
\[
\alpha\in H_h(C^0(\R/\Z,M)) 
\]
is represented by a singular cycle whose support is contained in
the length sublevel
\[
\set{\gamma\in C^0(\R/\Z,M)}{\gamma \mbox{ is absolutely continuous and } L(\gamma) \leq C_0 h}.
\]
\end{thm}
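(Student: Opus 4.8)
The plan is to filter the loop space by length and to play two geometric maps against each other: concatenation, which is additive in both length and homological degree, and a cutting map, which breaks a long loop into short arcs at bounded cost. Together these should force a class in degree $h$ down to length scale $O(h)$. First I would set up a convenient model. Since $M$ is compact, all Riemannian metrics on it are bilipschitz equivalent, so establishing the estimate for one metric establishes it for every metric after rescaling $C_0$; I may therefore fix the metric together with a bound on $\diam M$ and a positive injectivity radius, and replace $C^0(\R/\Z,M)$ by the homotopy equivalent space of broken-geodesic loops, on which length is a concrete function. Using that $M$ is simply connected, I would also pass from free to based loops $\Omega M$: the evaluation fibration $\Omega M \to C^0(\R/\Z,M) \to M$ (evaluation at $0$) and the compactness of $M$ let me rebuild a free cycle from a based one by dragging the basepoint along minimizing geodesics of length at most $\diam M$, a correction that is absorbed into $C_0$.

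Write $\Lambda_{\le \ell}$ for the space of loops of length at most $\ell$. Concatenation restricts to $\Lambda_{\le \ell}\times\Lambda_{\le \ell'}\to\Lambda_{\le \ell+\ell'}$ and makes $H_*(\Omega M)$ a graded ring in which the degree of a product is the sum of the degrees; since every positive-degree factor raises the degree by at least one, a decomposable class of degree $h$ is a product of at most $h$ positive-degree factors. Conversely, the cutting map sends a loop of length at most $2\ell$ to the concatenation of its two arc-length halves, each closed up by a minimizing geodesic back to the basepoint; up to homotopy and a length correction of order $\diam M$ this realizes the long loop as a product of two loops of length at most $\ell+\diam M$. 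Feeding this into the product structure gives, for decomposable classes, a recursion of the form $N(h)\le N(a)+N(b)+2\diam M$ with $a+b=h$ and $a,b\ge 1$, where $N(h)$ denotes the least length at which every degree-$h$ class is representable. Iterating down to the degree-one factors, the additive corrections accumulate to a bound that is still linear in $h$, so the decomposable part of the homology is controlled once the indecomposable generators are.

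The hard part will be precisely this last proviso: representing the indecomposable (primitive) classes of $H_h(\Omega M)$ at length scale linear in $h$. Rationally these correspond, via Milnor--Moore, to the homotopy Lie algebra $\pi_{*+1}(M)\otimes\Q$, which for a rationally hyperbolic $M$ has generators in arbitrarily high degree; such classes are not products and so escape the recursion above, and must be produced by hand. Here I would invoke Morse theory of the energy functional on $\Lambda M$, whose critical points are the closed geodesics. A nonzero class in degree $h$ is detected by a minimax that is carried by a closed geodesic whose Morse index and nullity straddle $h$, and Bott's iteration theory gives that the index of the $k$-th iterate of a primitive geodesic grows linearly in $k$ while its length is exactly $k$ times the primitive length. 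This proportionality between index and length is the quantitative heart of the theorem: it lets one match a generating class in degree $h$ with a closed geodesic of length $O(h)$, and it is exactly where the linear (rather than merely polynomial) growth comes from.

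The delicate point inside this step is that the index–length proportionality requires the relevant primitive geodesics to have positive average index; handling the degenerate directions of vanishing average index is where I expect the genuine difficulty to concentrate, and it is resolved by combining Bott's formula with the growth of the Betti numbers of $\Lambda M$ for simply connected $M$ (the Vigué-Poirrier--Sullivan circle of results), which guarantees that enough homology is already present at linear length scale. Once each primitive generator is represented at length $O(h)$, I would organize the minimax families so that their supports stay at this scale, combine them with the decomposable recursion of the second paragraph, and finally transfer the resulting short based cycles back through the evaluation fibration. This yields a single constant $C_0$, depending only on the geometry of $M$, with every class $\alpha\in H_h(C^0(\R/\Z,M))$ represented by a cycle supported in $\{L(\gamma)\le C_0 h\}$, as required.
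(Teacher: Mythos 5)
First, a point of reference: the paper does not prove Theorem \ref{length} at all --- it is Gromov's theorem, quoted from \cite{gro78} (see also \cite[Theorem 7.3]{gro99} and \cite[Theorem 5.10]{pat99}); the appendix only proves Corollary \ref{speed}, the passage from a length bound to a speed bound, by an explicit reparametrization homotopy. Gromov's own argument is a quantitative homotopy-theoretic one: represent the class by a map $f:P\to \Lambda M$ with $\dim P=h$, pass to the adjoint $F:P\times S^1\to M$, and homotope $F$ skeleton by skeleton, using $\pi_1(M)=0$ to contract the image of the $1$-skeleton and the compactness of $M$ to solve each cell-extension problem at a uniformly bounded cost in length; the bound is linear in $h$ because the induction has $h+O(1)$ stages, each adding a bounded amount of length. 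Your route --- Pontryagin products for decomposable classes plus Morse theory of the energy functional for the indecomposable ones --- is genuinely different, and it is in the second step that it breaks down.

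The gap is that your treatment of the indecomposable classes is circular. Minimax Morse theory attaches to a nonzero class of degree $h$ a closed geodesic $\gamma$ with $\mathrm{ind}(\gamma)\leq h\leq \mathrm{ind}(\gamma)+\mathrm{nul}(\gamma)$, but the only comparison between index and length valid for an arbitrary metric goes the wrong way: if the sectional curvature satisfies $K\leq \Lambda^2$, consecutive conjugate points along a geodesic are at distance at least $\pi/\Lambda$, so $\mathrm{ind}(\gamma)\leq C\bigl(1+L(\gamma)\bigr)$, whence $L(\gamma)\geq c\,h-C'$ --- a \emph{lower} bound on the length of the carrying geodesic, and no upper bound at all. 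The reverse inequality $\mathrm{ind}(\gamma)\geq c\,L(\gamma)-C$, which is what your argument needs, is exactly positivity of the average index; Bott's iteration theory gives index--length proportionality only along the iterates of a single closed geodesic with positive average index, and there is no reason that the indecomposable classes (of which there are infinitely many, in arbitrarily high degree, when $M$ is rationally hyperbolic) are carried by iterates of such geodesics --- locating the minimax classes at linear length scale is the problem itself. Your final appeal to the Vigu\'e-Poirrier--Sullivan results cannot fill this hole: those results are purely topological (growth of the Betti numbers of $\Lambda M$) and carry no metric information, so the assertion that they ``guarantee that enough homology is already present at linear length scale'' is precisely the statement of Gromov's theorem, assumed rather than proved. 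Two lesser issues: Milnor--Moore is a rational statement, while the theorem concerns integral singular homology; and dragging the basepoint ``along minimizing geodesics'' is discontinuous at the cut locus, so the based-to-free reduction through the evaluation fibration also needs an actual argument.
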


See also Theorem 7.3 in \cite{gro99}, and Theorem 5.10 in the book of Paternain \cite{pat99} for detailed proofs. The cycle produced by Gromov's argument is made of loops whose speed may have large variation, so the length estimate does not imply a good estimate on the uniform norm of the speed, and not even on the energy (as erroneously claimed in \cite{pat99}). However, such an estimate can be easily obtained by a second homotopy, as shown by the following:   

\begin{cor}
\label{speed}
Assume that the manifold $M$ is compact and simply connected. Then
there exists a number $C$ such that for every $h\in \N$ every
singular homology class
\[
\alpha\in H_h(C^{\infty}(\R/\Z,M))
\]
is represented by a singular cycle whose support is contained in
\[
\set{\gamma\in C^{\infty}(\R/\Z,M)}{\|\dot{\gamma}\|_{\infty} \leq C h}.
\]
\end{cor}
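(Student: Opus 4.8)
The plan is to take the cycle furnished by Theorem \ref{length} and post-compose it with two homotopies: first a reparametrization that redistributes the speed to (nearly) constant speed, and then a smoothing. Since each operation will be realized as a homotopy of the cycle inside the loop space, the resulting cycle of smooth loops will still represent the class $\alpha$ under the homotopy equivalence relating $C^\infty(\R/\Z,M)$, $C^0(\R/\Z,M)$ and the free loop space.

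The guiding observation is elementary: a loop reparametrized proportionally to arc length acquires constant speed equal to its length, so Gromov's bound $L(\gamma)\leq C_0 h$ should translate directly into a bound on $\|\dot\gamma\|_\infty$. The difficulty, and the reason the naive argument fails, is that exact constant-speed reparametrization is undefined where the speed vanishes and does not depend continuously on $\gamma$, since the inverse of the arc-length function jumps across intervals on which $\gamma$ is stationary. To repair this I would regularize: for an absolutely continuous loop $\gamma$ with $\ell:=L(\gamma)$ I set
\[
s_\gamma(t) := \frac{1}{\ell + 1} \int_0^t \bigl( |\dot\gamma(\tau)| + 1 \bigr)\, d\tau,
\]
an orientation-preserving homeomorphism of $\R/\Z$ whose derivative lies in the interval $[\,1/(\ell+1),\,1\,]$, hence uniformly bi-Lipschitz on the sublevel $\{\ell\leq C_0 h\}$. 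The reparametrized loop $\gamma\circ s_\gamma^{-1}$ then obeys
\[
\bigl\| \tfrac{d}{du}\bigl(\gamma\circ s_\gamma^{-1}\bigr) \bigr\|_\infty = \sup_t \frac{(\ell+1)\,|\dot\gamma(t)|}{|\dot\gamma(t)| + 1} \leq \ell + 1 \leq C_0 h + 1.
\]
Because $s_\gamma$ is uniformly bi-Lipschitz and depends continuously on $\gamma$ through $|\dot\gamma|\in L^1$, the map $\gamma\mapsto \gamma\circ s_\gamma^{-1}$ is continuous into $C^0(\R/\Z,M)$; interpolating $s_\gamma$ linearly to the identity (a path of circle homeomorphisms) deforms the whole cycle, so the reparametrized cycle is homologous to the original.

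Next I would smooth the reparametrized loops, which are only Lipschitz, without essentially increasing the speed. Fixing an isometric embedding $M\hookrightarrow \R^N$ with nearest-point projection $P$ on a tubular neighborhood, for small $\delta$ I mollify the $\R^N$-valued loop and project it back by $P$. Convolution does not increase the supremum of the speed, $\|P\|$ can be arranged so that $\|DP\|\leq 2$ on a sufficiently thin neighborhood, and the mollified loop is $C^0$-close to the original (so it stays in the neighborhood); hence the smoothed loop lies in $C^\infty(\R/\Z,M)$ with $\|\dot\gamma\|_\infty \leq 2(C_0 h + 1)$, and the straight-line homotopy between a loop and its mollification, projected by $P$, gives the required homotopy of cycles. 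Taking $C := 2(C_0+1)$ handles all $h\geq 1$, while for $h=0$ the class is represented by a constant loop; this produces a singular cycle representing $\alpha$ supported in $\{\|\dot\gamma\|_\infty \leq C h\}$.

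I expect the main obstacle to be the continuity and homotopy bookkeeping rather than any individual estimate. One must check that the regularized reparametrization and the mollification-projection are continuous maps of loop spaces with constants uniform over the sublevel $\{L\leq C_0 h\}$, so that they genuinely deform the Gromov cycle within its homology class and the final cycle lands in $C^\infty$; once the regularization by $|\dot\gamma|+1$ is in place to defeat the vanishing-speed pathology, the speed bounds themselves are immediate from the construction.
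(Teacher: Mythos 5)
Your two ingredients --- a regularized arc-length reparametrization with weight $|\dot\gamma|+1$, and mollification followed by nearest-point projection --- are essentially the same as in the paper's proof (which uses the weight $\sqrt{1+|\dot\gamma|^2}$ to the same effect), and your pointwise speed estimates are correct. However, the \emph{order} in which you apply the two operations creates a genuine gap. Theorem \ref{length} produces a cycle whose simplices are continuous maps $\Delta^h \to C^0(\R/\Z,M)$; the only information about its support is that it consists of absolutely continuous loops with $L(\gamma)\le C_0 h$. Your first operation, $\gamma \mapsto \gamma\circ s_\gamma^{-1}$, is \emph{not} continuous for the $C^0$ topology, because $\gamma \mapsto |\dot\gamma| \in L^1$ is not: fix a non-constant loop $\gamma_0$ and let $\gamma_j$ be $\gamma_0$ with a wiggle of amplitude $1/j$ but total length $1$ concentrated in $[0,1/j]$; then $\gamma_j \to \gamma_0$ in $C^0$ with uniformly bounded lengths, yet $\int_0^t\bigl(|\dot\gamma_j|+1\bigr)\,d\tau \to 1+\int_0^t\bigl(|\dot\gamma_0|+1\bigr)\,d\tau$ for $t>0$, so $s_{\gamma_j}$ does not converge to $s_{\gamma_0}$ and $\gamma_j\circ s_{\gamma_j}^{-1} \not\to \gamma_0\circ s_{\gamma_0}^{-1}$. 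Consequently, composing your reparametrization (or its linear interpolation with the identity) with the simplices of Gromov's cycle need not yield continuous simplices, so neither the reparametrized cycle nor the claimed homotopy of cycles is well defined. Your phrase ``depends continuously on $\gamma$ through $|\dot\gamma|\in L^1$'' silently assumes continuity of the cycle in a $W^{1,1}$-type topology, which neither the statement of Theorem \ref{length} nor the ambient $C^0$ topology provides. (A minor further slip: the upper bound $s_\gamma'\le 1$ is false in general, since $|\dot\gamma(t)|$ can exceed $\ell$ pointwise; but this is inessential.)

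The repair is exactly the permutation of your two steps, and it is what the paper does. First smooth: convolution with a fixed kernel (after embedding $M$ in Euclidean space) followed by projection onto $M$ is continuous from $C^0(\R/\Z,M)$ to $C^{\infty}(\R/\Z,M)$, is $C^0$-homotopic to the identity, and only worsens the length constant $C_0$; a mollification parameter uniform over the cycle exists because the support of the cycle is compact, hence equicontinuous. After this step the cycle lives in the $C^{\infty}$ topology, where $\gamma\mapsto s_\gamma$ and the interpolation $F(\lambda,\gamma)(t)=\gamma\bigl(\lambda\,s_\gamma^{-1}(t)+(1-\lambda)t\bigr)$ \emph{are} continuous, and your bound $\|\dot\gamma_1\|_\infty \le 1+L(\gamma)\le (1+C_0)h$ goes through verbatim; the class is then transferred back to $H_h(C^{\infty}(\R/\Z,M))$ using injectivity of the map induced by the inclusion $C^{\infty}(\R/\Z,M)\hookrightarrow C^0(\R/\Z,M)$. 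So your estimates are fine, but the continuity bookkeeping you flagged as the ``main obstacle'' is precisely where the argument, as ordered, breaks.
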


Since the inclusions
\[
C^{\infty}(\R/\Z,M) \hookrightarrow W^{1,2}(\R/\Z,M) 
\hookrightarrow C^0(\R/\Z,M)
\]
are homotopy equivalences, the above result holds a fortiori when the
space of smooth loops is replaced by the space of $W^{1,2}$ loops, or
by the space of continuous loops. Corollary \ref{speed} can be deduced from the proof of Lemma 2.12 in \cite[Lemma 2.12]{fs06}. For sake of completeness, we include a proof also here.

\medskip

\begin{proof}
We may assume that $h\geq 1$, because the (unique) zero-homology class is realized by any constant loop. Let $\alpha$ be a homology class of degree $h$ and let $a_0$ be the singular cycle given by Theorem \ref{length}, so that
\begin{equation}
\label{stma}
[a_0] = \alpha, \quad L(\gamma) \leq C_0 h, \quad \forall \gamma\in \supp a_0.
\end{equation}
By regularization and up to choosing a larger $C_0$, we may assume that $a_0$ is a singular cycle in $C^{\infty}(\R/\Z,M)$. For instance, such a regularization can be achieved by embedding $M$ into some Euclidean space, by smoothing the loops in $\alpha$ by a convolution keeping them in a tubular neighborhood of $M$, and by projecting onto $M$.

In order to obtain a cycle $a_1$ which is homologous to $a_0$ and is
supported in a suitable speed sublevel, the natural idea would be to
build a homotopy changing the parametrization of the loops in $a_0$, so
that they become parametrized by constant speed. 
However, such a construction presents technical difficulties due to
the fact that the speed of some loops may vanish somewhere. We can overcome this difficulty by reparametrizing the
loops in $a_0$ by slowly varying speed, as follows.

Given a loop $\gamma\in C^{\infty}(\R/\Z,M)$, we define the real function $\sigma_{\gamma}$ on $[0,1]$ by
\[
\sigma_{\gamma}(t) := \frac{1}{\int_0^1 \sqrt{1+|\dot{\gamma}(s)|^2}\, ds} \int_0^t \sqrt{1+|\dot{\gamma}(s)|^2} \, ds.
\]
Notice that $\sigma_{\gamma}(0)=0$, $\sigma_{\gamma}(1)=1$. Since the square root is subadditive, we have
\begin{equation}
\label{der}
\sigma_{\gamma}' (t) = \frac{\sqrt{1+|\dot{\gamma}(t)|^2}}{\int_0^1 \sqrt{1+|\dot{\gamma}(s)|^2}\, ds} \geq \frac{\sqrt{1+|\dot{\gamma}(t)|^2}}{1+ L(\gamma)} >0,
\end{equation}
so $\sigma_{\gamma}$ is a smooth diffeomorphism of $[0,1]$ onto itself. Denote by $\tau_{\gamma}$ its inverse, and notice that the mapping $\gamma\mapsto \tau_{\gamma}$ is continuous in the $C^{\infty}$ topology. Then the homotopy
\[
F : [0,1] \times C^{\infty}(\R/\Z,M) \rightarrow C^{\infty}(\R/\Z,M), \quad F(\lambda,\gamma)(t) := \gamma(\lambda \tau_{\gamma}(t) + (1-\lambda)t),
\]
is continuous, so the cycle $a_0= F(0,\cdot)_* a_0$ is homologous to the cycle
\[
a_1 := F(1,\cdot)_* a_0.
\]
If $\gamma_1= F(1,\gamma)$, $\gamma$ in the support of $a_0$, is a loop in the support of $a_1$, differentiating the identity $\gamma_1(\sigma_{\gamma}(t)) = \gamma(t)$ we find by (\ref{der})
\[
|\dot{\gamma}_1(\sigma_{\gamma}(t))| = \frac{|\dot{\gamma}(t)|}{\sigma_{\gamma}^{\prime}(t)} \leq \frac{|\dot{\gamma}(t)|}{\sqrt{1+|\dot{\gamma}(t)|^2}} (1+L(\gamma)) \leq 1 + L(\gamma).
\]
Therefore, by (\ref{stma})
\[
\|\dot{\gamma}_1\|_{\infty} \leq 1+L(\gamma) \leq 1 + C_0 h \leq  (1+C_0) h.
\]
Therefore, $a_1$ is the desired cycle in $C^{\infty}(\R/\Z,M)$.
\end{proof}\qed

\end{document}